\DeclareMathAlphabet\mathcalbf{OMS}{cmsy}{b}{n}
\DeclareMathAlphabet\EuScript{U}{eus}{m}{n}
\DeclareMathAlphabet\EuScriptBold{U}{eus}{b}{n}
\numberwithin{equation}{section}
\newtheorem{theorem}{Theorem}[section]
\newtheorem{lemma}[theorem]{Lemma}
\newtheorem{corollary}[theorem]{Corollary}
\newtheorem{remark}[theorem]{Remark}
\def\C{\mathbb C}
\begin{document}
\allowdisplaybreaks

\title[]
{An explicit formula of Cauchy--Szeg\"{o} kernel for quaternionic Siegel upper half space
and applications}
\author{Der-Chen Chang, Xuan Thinh Duong, Ji Li, Wei Wang and Qingyan Wu}

\address{Der-Chen Chang,
Department of Mathematics and Department of Computer Science,
Georgetown University, Washington D.C. 20057, USA
and
Graduate Institute of Business Administration, College of Management,
Fu Jen Catholic University, New Taipei City 242, Taiwan, ROC.}
\email{chang@georgetown.edu}

\address{Xuan Thinh Duong, Department of Mathematics, Macquarie University, NSW, 2109, Australia}
\email{xuan.duong@mq.edu.au}

\address{Ji Li, Department of Mathematics, Macquarie University, NSW, 2109, Australia}
\email{ji.li@mq.edu.au}

\address{Wei Wang, Department of Mathematics, Zhejiang University, Zhejiang 310027, China}
\email{wwang@zju.edu.cn}

\address{Qingyan Wu, Department of Mathematics\\
         Linyi University\\
         Shandong, 276005, China
         }
\email{qingyanwu@gmail.com}

\subjclass[2010]{32A25,\ 32A26,\ 43A80,\ 42B20}
\date{\today}
\keywords{Cauchy--Szeg\"{o} kernel, quaternionic Siegel upper half space, Calder\'on--Zygmund kernel}

\begin{abstract}
In this paper we obtain an explicit formula of Cauchy--Szeg\"{o} kernel for quaternionic Siegel upper half space, and then based on this, we prove that the Cauchy--Szeg\"{o} projection on quaternionic Heisenberg group is a Calder\'on--Zygmund operator via verifying the size and regularity conditions for the kernel. Next, we also obtain a suitable version of pointwise lower bound
for the  kernel, which further implies the characterisations of the boundedness and compactness of commutator of the Cauchy--Szeg\"{o} operator via the BMO and VMO spaces on  quaternionic Heisenberg group, respectively.
\end{abstract}

\maketitle


\section{Introduction and statement of main results}
\setcounter{equation}{0}

\subsection{Background}

The quaternion numbers $x=x_1+x_2{\bf i}+x_3{\bf j}+x_4{\bf k}$ with ${\bf i}^2 = {\bf j}^2 = {\bf k}^2 = {\bf ijk} = -1$ were introduced
by Hamilton as an extension of complex numbers in the $19^{th}$ century and applied to mechanics in three-dimensional space. A feature of quaternions is that multiplication of two quaternions is noncommutative.   Let $\mathbb H$ be the system of quaternion numbers and let $\operatorname{Re}x$ and $\operatorname{Im}x$ denote the real part and imaginary part of $x$ respectively. Then $\operatorname{Re}x=x_1$ and $\operatorname{Im}x=x_2{\bf i}+x_3{\bf j}+x_4{\bf k}$. The $n$-dimensional quaternion space $\mathbb H^n$ is the collection of $n$-tuples $(q_1, \cdots, q_n)$.
An $\mathbb H$-valued function $f: \Omega\to\mathbb H$ is called regular if
$\bar\partial f =0,$ where
$$\bar\partial={\partial\over\partial x_1}+\sum_{m=1}^3{\bf i}_m {\partial\over\partial x_{m+1}} \ \ {\rm or}\ \
\bar\partial={\partial\over\partial x_1}+\sum_{m=1}^3 {\partial\over\partial x_{m+1}}{\bf i}_m,\quad
{\bf i}_1={\bf i}, ~{\bf i}_2={\bf j}, ~{\bf i}_3={\bf k}.$$
The theory of regular functions of several quaternionic variables began in 1980s. So far there are several fundamental results established for the quaternionic counterpart of the theory of several complex variables, e.g., Hartogs phenomenon, $k$-Cauchy--Fueter complexes, quaternionic pluripotential theory etc  (see for example \cite{alesker1,CSSS,wan-wang,wang19} and the references therein).
Because of the non-commutativity of the quaternion $\mathbb H$, many new phenomena emerge, e.g. the product of two regular functions is not regular in general. Hence, proofs of almost all results are completely different from the standard setting of complex variables.

Parallel to the setting of several complex variables,
people are also interested in the Hardy space of regular functions
over a bounded domain in $\mathbb H^n$, in particular, over the unit ball. By quaternionic Cayley transformation, it is equivalent to considering the Hardy space over the Siegel upper half domain
%
%
%
%
%
$$\mathcal U_n:=\left\{q=(q_1,\cdots, q_n)=(q_1,q')\in\mathbb H^n\mid \operatorname{Re} q_1>|q'|^2\right\},$$
where $q'=(q_2,\cdots,q_n)\in\mathbb H^{n-1}$, whose boundary
$\partial \mathcal U_n:=\{(q_1,q')\in\mathbb H^n\mid \operatorname{Re}q_1=|q'|^2\}$
 is a quadratic hypersurface, which can be identified with the quaternionic Heisenberg group
 $\mathscr H^{n-1}$.
 For the notation and details of definition we refer to Section 2.

 We point out that the quaternionic Heisenberg group  $\mathscr H^{n-1}$
 plays the fundamental role in quaternionic analysis and geometry \cite{Chr,Ivanov2,Wa11,shi-wang16}. Its analytic and geometric
behaviours are different from the usual Heisenberg group in many aspects, e.g., there does not exist nontrivial quasiconformal mapping between the quaternionic Heisenberg group \cite{Pansu} while
quasiconformal mappings between Heisenberg groups are abundant \cite{Koranyi1,Koranyi2}.

 The identification $\partial \mathcal U_n$ with the quaternionic Heisenberg group $\mathscr H^{n-1}$  via the projection \eqref{p} helps us to determine the kernel of the Cauchy--Szeg\"o projection from
 $L^2(\partial \mathcal U_n)$ to $H^2(\partial \mathcal U_n)$ \cite{CM08}.
 This was just obtained  recently  by the first, fourth authors and Markina
 \cite{CMW}. To be more explicit, we recall the result as follows.

 \medskip\noindent
{\bf Theorem A (\cite{CMW}).} {\it
The Cauchy--Szeg\"o kernel is given by
\begin{align}\label{cauchy-szego}
S(q,p)=s\Big(q_1+\overline p_1-2\sum_{k=2}^n\overline p_kq_k\Big),
\end{align}
for $p=(p_1,p')=(p_1,\cdots, p_n)\in\mathcal U_n$, $q=(q_1,q')=(q_1,\cdots, q_n)\in\mathcal U_n$, where
\begin{align}\label{s}
s(\sigma)=c_{n-1}{\partial^{2(n-1)}\over \partial x_1^{2(n-1)}}{{\overline \sigma}\over |\sigma|^4},\quad
\sigma=x_1+x_2{\bf i}+x_3{\bf j}+x_4{\bf k}\in\mathbb H,
\end{align}
with the real constant $c_{n-1}$ depending only on $n$.
%
%
The Cauchy--Szeg\"o kernel satisfies the reproducing property in the following sense
$F(q)=\int_{\partial \mathcal U_n} S(q,\xi)F^b(\xi)d\beta(\xi),\ q\in\mathcal U_n,
$
whenever $F\in \mathcal H^2(\mathcal U_n)$ and $F^b$ its boundary value on $\partial \mathcal U_n$.
}
\medskip

Recall that the standard Cauchy--Szeg\"o projection on the usual Heisenberg group has an explicit formula, which, together with the appropriate quasi-metric and the volume function, implies that the Cauchy--Szeg\"o kernel on the Heisenberg group is a Calder\'on--Zygmund kernel (we refer to \cite[Section 2, Chapter XII]{St} for background and  details). Hence, Stein  (\cite[Theorem 2, Chapter XII]{St}) summarised and stated the main conclusion as:

``The Cauchy--Szeg\"o projection on the Heisenberg group has an extension to a bounded operator on $L^p$ for all $1<p<\infty$.''

Moreover, it is also known that from the explicit formula of the Cauchy--Szeg\"o kernel, especially the pointwise lower bound, one can obtain the characterisation of the boundedness and compactness of the commutator of the Cauchy--Szeg\"o projection via the BMO and VMO spaces, respectively. See the recent results in \cite[Section 7.2]{DGKLWY} and \cite{CDLW}, respectively.
\medskip

It is natural to ask, whether the Cauchy--Szeg\"o kernel on the quaternionic Heisenberg group determined in Theorem A above has an explicit formula and a Calder\'on--Zygmund theory parallel to \cite[Section 2, Chapter XII]{St}?

\smallskip

\subsection{Statement of main results}

The aim of this paper is to address this question, providing
an explicit formula of the Cauchy--Szeg\"o kernel for the quaternion Siegel upper half-space. Then by using this formula we verify that it is a standard Calder\'on--Zygmund kernel with respect to the  quasi-metric $\rho$ (defined in Section 2, which is a standard definition in general stratified Lie group, see for example \cite[Chapter 1]{FS}), that is, it satisfies the standard size, and smoothness conditions in terms of $\rho$. Moreover, we also provide a suitable version of pointwise kernel lower bound from the explicit formula of the kernel, which was motivated by recent works on commutator estimates such as \cite{Hy, DGKLWY, DLLW}.

 As a direct application, we establish the characterisation of the BMO space via the commutator $[b,\mathcal C]$, which is parallel to the classical setting in \cite{CRW} for the Riesz transforms on $\mathbb R^n$, and some closely related recent results in \cite{HLW} for two weight commutators, \cite{LNWW} for the Cauchy integral along Lipschitz curves, \cite{DLLW} for the Riesz transform on stratified Lie groups, \cite{LOR,LOR2,GLW} for singular integrals with rough kernels, \cite{Hy} for the $L^p$ to $L^q$ boundedness of commutators with applications to the Jacobian operator, \cite{DLLWW} for the Cauchy integrals on certain pseudoconvex domains in $\mathbb C^n$ with minimal regularity condition on the boundary. See also the general frame on space of homogeneous type in \cite{DGKLWY}, which restructured the proof of \cite{DLWY,DHLWY,LW} and the references therein. Note that the weakest sufficient condition known so far on the kernel lower bound of Calder\'on--Zygmund operators which gives the characterisation of the BMO via commutator was due to Hyt\"onen \cite{Hy}.

 The characterisation of the compactness of the commutator $[b,\mathcal C]$ is also studied by using the pointwise kernel lower bound that we obtained. For the previous related result on compactness of commutators, we refer to (but not restricted to) \cite{U,CDLW, GWY,TYY,DLLWW}.

We now state our first main result.
\begin{theorem}\label{thm-cskernel}
The explicit formula of Cauchy--Szeg\"o kernel for the quaternionic Siegel upper half-space $ \mathcal U_n$ is given by
\begin{align*}
S(q,p)=s\Big(q_1+\overline p_1-2\sum_{k=2}^n\overline p_kq_k\Big),
\end{align*}
for $p=(p_1,p')=(p_1,\cdots, p_n)\in\mathcal U_n$, $q=(q_1,q')=(q_1,\cdots, q_n)\in\mathcal U_n$, where
\begin{align}\label{s 1}
s(\sigma)&= c_{n-1} {4 (2n-2)! \over |z|^4  (z-\bar z)^3 } {\bf i}\\
&\quad\times \bigg\{\operatorname{Im}\bigg[ {\bar z^2\over z^{2n-2}}\big( z+(2n-1){z-\bar z\over2} \big) \bigg] {\overline \sigma}-
\operatorname{Im} \bigg[ {\bar z^2\over z^{2n-3}}\big( z+(2n-2){z-\bar z\over2} \big)  \bigg]\bigg\},
\nonumber
\end{align}
here $
\sigma=x_1+x_2{\bf i}+x_3{\bf j}+x_4{\bf k}\in\mathbb H
$, $z=x_1+ |\operatorname{Im}\sigma|{\bf i}$, and $c_{n-1}$ is the one in Theorem A.
\end{theorem}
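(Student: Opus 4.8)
The plan is to take the formula of Theorem A at face value and simply carry out the differentiation $s(\sigma)=c_{n-1}\,\partial_{x_1}^{2(n-1)}\bigl(\overline\sigma/|\sigma|^4\bigr)$ in closed form. The only variable being differentiated is $x_1$, so throughout the computation $\operatorname{Im}\sigma=x_2\mathbf i+x_3\mathbf j+x_4\mathbf k$, and hence $|\operatorname{Im}\sigma|$, are constants. Setting $z=x_1+|\operatorname{Im}\sigma|\mathbf i$ gives $|\sigma|^2=x_1^2+|\operatorname{Im}\sigma|^2=z\bar z=|z|^2$, and, crucially, $\partial_{x_1}z=\partial_{x_1}\bar z=1$ while $z-\bar z=2|\operatorname{Im}\sigma|\mathbf i$ is independent of $x_1$. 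Since $\overline\sigma=x_1-\operatorname{Im}\sigma$ is affine in $x_1$ with $\partial_{x_1}\overline\sigma=1$ and $\partial_{x_1}^2\overline\sigma=0$, the Leibniz rule collapses to two terms,
\begin{equation*}
\partial_{x_1}^{2n-2}\frac{\overline\sigma}{|z|^4}
=\overline\sigma\,\partial_{x_1}^{2n-2}\frac1{|z|^4}+(2n-2)\,\partial_{x_1}^{2n-3}\frac1{|z|^4},
\end{equation*}
so the whole problem reduces to evaluating $\partial_{x_1}^{m}|z|^{-4}$ for $m=2n-2$ and $m=2n-3$. (Note that $|z|^{-4}$ is a real-valued function of $x_1$, so all of its $x_1$-derivatives are real scalars and commute with $\overline\sigma$; the order of the product above is immaterial.)

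For these two derivatives I would decompose into partial fractions involving only the ``easy'' factors $z$ and $\bar z$. From $\tfrac1{\bar z}-\tfrac1z=\tfrac{z-\bar z}{z\bar z}$, writing $w:=z-\bar z$, we get $\tfrac1{z\bar z}=\tfrac1w\bigl(\tfrac1{\bar z}-\tfrac1z\bigr)$, and squaring and applying the same identity once more,
\begin{equation*}
\frac1{|z|^4}=\frac1{w^2}\Bigl(\frac1{\bar z^2}+\frac1{z^2}\Bigr)-\frac2{w^3}\Bigl(\frac1{\bar z}-\frac1z\Bigr).
\end{equation*}
Since $w$ does not depend on $x_1$ and $\partial_{x_1}^m z^{-k}=(-1)^m\frac{(k+m-1)!}{(k-1)!}\,z^{-(k+m)}$ (and likewise for $\bar z$), every term differentiates explicitly. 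Substituting $m=2n-2$ and $m=2n-3$, clearing the common denominator $|z|^4 w^3=z^2\bar z^2 w^3$, and collecting yields a fixed linear combination of the expressions $\dfrac{z^2}{\bar z^{\,j}}\pm\dfrac{\bar z^2}{z^{\,j}}$, with coefficients built from powers of $w$ and from the factorials $(2n-1)!$ and $(2n-2)!$.

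The final step is to recast these as real and imaginary parts and repackage. All the relevant quantities $z,\bar z,w$ and the scalar coefficients lie in the commutative subfield $\mathbb R+\mathbb R\mathbf i\subset\mathbb H$, on which quaternionic conjugation is the usual complex conjugation and interchanges $z\leftrightarrow\bar z$; hence $\dfrac{z^2}{\bar z^{\,j}}=\overline{\bigl(\bar z^2/z^{\,j}\bigr)}$, so the ``$+$'' combination equals $2\operatorname{Re}\bigl(\bar z^2/z^{\,j}\bigr)$ and the ``$-$'' combination equals $-2\mathbf i\operatorname{Im}\bigl(\bar z^2/z^{\,j}\bigr)$. Using $w=2|\operatorname{Im}\sigma|\mathbf i$, $\tfrac w2=\tfrac{z-\bar z}2=|\operatorname{Im}\sigma|\mathbf i$, the elementary identities $\operatorname{Im}(\mathbf i\zeta)=\operatorname{Re}\zeta$ and $\operatorname{Re}(\mathbf i\zeta)=-\operatorname{Im}\zeta$, and $(2n-1)!=(2n-1)(2n-2)!$, one checks that $\partial_{x_1}^{2n-2}|z|^{-4}$ equals $\tfrac{4(2n-2)!}{|z|^4 (z-\bar z)^3}\,\mathbf i\operatorname{Im}\!\bigl[\tfrac{\bar z^2}{z^{2n-2}}\bigl(z+(2n-1)\tfrac{z-\bar z}2\bigr)\bigr]$ and that $(2n-2)\,\partial_{x_1}^{2n-3}|z|^{-4}$ equals $-\tfrac{4(2n-2)!}{|z|^4 (z-\bar z)^3}\,\mathbf i\operatorname{Im}\!\bigl[\tfrac{\bar z^2}{z^{2n-3}}\bigl(z+(2n-2)\tfrac{z-\bar z}2\bigr)\bigr]$; substituting back and multiplying by $c_{n-1}$ gives exactly \eqref{s 1}. (The identity is first derived for $\operatorname{Im}\sigma\neq0$ and then holds in general since $s$ extends continuously.) The computation is elementary, and the only genuinely delicate point is the bookkeeping in this last repackaging: keeping track of the two families of factorial weights, of the exponents $2n-2$ versus $2n-3$, and of the $\operatorname{Re}/\operatorname{Im}$ sorting all at once, while using the reality of $|z|^{-4}$ and its derivatives to dismiss any apparent noncommutativity issue.
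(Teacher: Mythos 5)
Your proposal is correct, and it reproduces \eqref{s 1} by a route whose core computational step differs from the paper's. Both arguments start identically: take Theorem A at face value, note $\partial_{x_1}\overline\sigma=1$ so that Leibniz collapses $\partial_{x_1}^{2n-2}\bigl(\overline\sigma/|\sigma|^4\bigr)$ into the two terms $\overline\sigma\,\partial_{x_1}^{2n-2}|\sigma|^{-4}+(2n-2)\,\partial_{x_1}^{2n-3}|\sigma|^{-4}$, introduce $z=x_1+|\operatorname{Im}\sigma|{\bf i}$ with $|\sigma|^2=z\bar z$, and finish by sorting into real and imaginary parts in the commutative subfield $\mathbb R+\mathbb R{\bf i}$. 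Where you diverge is the evaluation of $\partial_{x_1}^{l}|z|^{-4}$: the paper expands $z^{-2}\bar z^{-2}$ by the Leibniz rule into the double-product sum \eqref{derivative} and then must sum the series $\sum_{k=0}^{l}(l-k+1)(k+1)w^{k}$ (with $w=z/\bar z$) through the chain of auxiliary functions ${\bf A},{\bf G},{\bf H},{\bf F}$ and the substitution $\zeta=w^{-1}$; you instead use the partial-fraction identity $|z|^{-4}=w^{-2}(\bar z^{-2}+z^{-2})-2w^{-3}(\bar z^{-1}-z^{-1})$ with $w=z-\bar z$ constant in $x_1$, so each term differentiates in one line and no series summation is needed. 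I checked your identity (clearing $z^2\bar z^2w^3$ gives $w(z-\bar z)^2=w^3$) and verified that after differentiation and the $\operatorname{Re}/\operatorname{Im}$ repackaging your two displayed expressions for $\partial_{x_1}^{2n-2}|z|^{-4}$ and $(2n-2)\partial_{x_1}^{2n-3}|z|^{-4}$ agree with the paper's general formula (including the sign $(-1)^l$ for $l=2n-3$), so the conclusion matches \eqref{s 1} exactly; your remark that all $x_1$-derivatives of $|z|^{-4}$ are real scalars correctly disposes of the noncommutativity issue, just as in the paper. The trade-off: your route is shorter and more transparent at this step, while the paper's expanded sum \eqref{ssigma1} is not wasted effort there, since that explicit coefficient form is reused verbatim in the proof of Theorem \ref{thm-ksize} for the size and gradient estimates; with your decomposition one would instead carry the four simple terms $z^{-a}\bar z^{-b}$ through those estimates, which works equally well.
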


For any function $F:\mathcal U_n\rightarrow\mathbb H$, we write $F_\varepsilon$ for its ``vertical translate", where the vertical direction is given by the positive direction of $\operatorname{Re}q_1: F_\varepsilon(q)=F(q+\varepsilon{\bf e})$, where ${\bf e}=(1,0,0,\cdots,0)\in\mathbb H^{n}$. If $\varepsilon>0$, then $F_\varepsilon$ is defined in a neighborhood of $\partial \mathcal U_n$. In particular, $F_\varepsilon$ is defined on $\partial \mathcal U_n$.

The Cauchy--Szeg\"o projection operator $\mathcal C$ can be defined via the ``vertical translate"
from  Cauchy--Szeg\"o kernel for  $ \mathcal U_n$ by
\begin{align*}
(\mathcal C f)(q)=\lim_{\varepsilon\to 0}\int_{\partial \mathcal U_n} S(q+\varepsilon {\bf e}, p)f(p)d\beta(p),\quad
\forall f\in L^2(\partial \mathcal U_n), \quad q\in \partial \mathcal U_n,
\end{align*}
where the limit exists in the $L^2(\partial \mathcal U_n)$ norm and $\mathcal C(f)$ is the boundary limit of some function in $\mathcal H^2(\mathcal U_n)$.

In view of the action of the quaternionic Heisenberg group, the operator $\mathcal C$ can be explicitly described as a convolution operator on this group:
\begin{align}\label{cs projection gp}
(\mathcal C f)(g)=(f*K)(g)=p.v. \int_{\mathscr H^{n-1}}K(h^{-1}\cdot g)f(h)dh,
\end{align}
where the kernel $K(g)$ is defined  in Section 2 below.
We can write
\begin{align}\label{cs projection gp 1}
(\mathcal C f)(g)=p.v. \int_{\mathscr H^{n-1}}K(g,h)f(h)dh,
\end{align}
where $K(g,h)=K(h^{-1}\cdot g)\ {\rm for}\ g\neq h.$ Note that {\eqref{cs projection gp 1} holds whenever $f$ is an $L^2$ function supported in a compact set, for ever $g$ outside the support of $f$.}

As the second main result, we {now prove} the size and regularity estimate for the Cauchy--Szeg\"o kernel as follows.
\begin{theorem} \label{thm-ksize}
Suppose $j=1,\ldots, 4n-4,$ and we denote $Y_j$ the left-invariant  vector fields on $\mathscr H^{n-1}$ $($defined  as in \eqref{Y} in Section 2$)$. Then we have
\begin{align}\label{gradient}
\left|Y_jK(g)\right|\lesssim{1\over \rho(g,{\bf0})^{Q+1}},\quad g\in\mathscr H^{n-1}\setminus\{{\bf0}\},\quad
\end{align}
where ${\bf0}$ is the neutral element of $\mathscr H^{n-1}$.

Then we further have
the Cauchy--Szeg\"o kernel $K(g, h)$ on $\mathscr H^{n-1}$ $(g\not=h)$ satisfies the following conditions.
\begin{align*}
&{\rm (i)}\ \ |K(g, h)|\lesssim {1\over\rho(g,h)^{Q}};\\
&{\rm (ii)}\ \ |K(g,h) - K(g_0,h)|\lesssim   {\rho(g, g_0)\over \rho(g_0,h)^{Q+1} },\quad {\rm if}\ \rho(g_0,h)\geq c\rho(g,g_0);\\
&{\rm (iii)}\ \ |K(g,h) - K(g,h_0) |\lesssim   { \rho(h, h_0)\over \rho(g,h_0)^{Q+1} },\quad {\rm if}\ \rho(g,h_0)\geq c \rho(h,h_0)
\end{align*}
for some constant $c>0$,
where $Q=4n+2$ is the homogeneous dimension of $\mathscr H^{n-1}$ and $\rho$ is defined in Section 2.
\end{theorem}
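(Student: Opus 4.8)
The plan is to read the three Calder\'on--Zygmund conditions, and the horizontal gradient bound \eqref{gradient}, directly off the explicit description of the convolution kernel from Section~2 together with the homogeneity structure of $s$. Writing a group element as $g=(y,t)$ with $y\in\mathbb H^{n-1}$ and $t\in\operatorname{Im}\mathbb H\cong\R^{3}$, recall from Section~2 that the Cauchy--Szeg\"o kernel is the left-invariant convolution kernel
$$K(g)=s\big(\sigma(g)\big),\qquad\text{where}\quad\sigma(g)=|y|^{2}+t\in\mathbb H ,$$
so that $\operatorname{Re}\sigma(g)=|y|^{2}$, $\operatorname{Im}\sigma(g)=t$, hence $|\sigma(g)|^{2}=|y|^{4}+|t|^{2}\approx\rho(g,{\bf 0})^{4}$, and $\sigma(g)=0$ exactly when $g={\bf 0}$; moreover $K(g,h)=s\big(\sigma(h^{-1}\!\cdot g)\big)$ for $g\neq h$. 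The one analytic input about $s$ I would isolate is that, by \eqref{s}, $s=c_{n-1}\,\partial_{x_1}^{2(n-1)}\big(\overline\sigma/|\sigma|^{4}\big)$ is smooth on $\mathbb H\setminus\{0\}$ and positively homogeneous of degree $-(2n+1)$ in $\sigma\in\R^{4}$, because $\overline\sigma/|\sigma|^{4}$ is homogeneous of degree $-3$ and each $\partial_{x_1}$ lowers the degree by one; hence by compactness of the Euclidean unit sphere
$$\big|\partial^{\alpha}s(\sigma)\big|\lesssim|\sigma|^{-(2n+1)-|\alpha|},\qquad\sigma\in\mathbb H\setminus\{0\},$$
for every multi-index $\alpha$. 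The explicit formula in Theorem~\ref{thm-cskernel} is an equivalent starting point; its precise shape is needed only for the kernel lower bound in later sections.

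Given these facts, the size bound (i) is immediate: since $Q=4n+2=2(2n+1)$,
$$|K(g,h)|=\big|s(\sigma(h^{-1}g))\big|\lesssim|\sigma(h^{-1}g)|^{-(2n+1)}\approx\rho(h^{-1}g,{\bf 0})^{-2(2n+1)}=\rho(g,h)^{-Q}.$$
For \eqref{gradient} I would use that in exponential coordinates each left-invariant vector field $Y_j$, $1\le j\le 4n-4$, has the form $Y_j=\partial_{y_j}+(\text{linear in }y)\cdot\partial_t$; applying the chain rule to $K=s\circ\sigma$ and using $Y_j(|y|^{2})=2y_j$ and $Y_j(t)=(\text{linear in }y)$ gives $|Y_j\sigma(g)|\lesssim|y|\le\rho(g,{\bf 0})$, whence
$$\big|Y_jK(g)\big|\le\big|(Ds)(\sigma(g))\big|\,\big|Y_j\sigma(g)\big|\lesssim|\sigma(g)|^{-(2n+2)}\,\rho(g,{\bf 0})\approx\rho(g,{\bf 0})^{-2(2n+2)+1}=\rho(g,{\bf 0})^{-(Q+1)}.$$

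Conditions (ii) and (iii) then follow from \eqref{gradient} by a one-variable mean value argument. For (ii), put $u=h^{-1}g_0$ and $v=g_0^{-1}g$, so that $h^{-1}g=uv$, $\rho(v,{\bf 0})=\rho(g,g_0)$, $\rho(u,{\bf 0})=\rho(g_0,h)$. Expanding $|y_u+y_v|^{2}=|y_u|^{2}+2\operatorname{Re}\langle y_u,y_v\rangle+|y_v|^{2}$ together with the bilinear second-layer coordinate of $uv$, one finds $|\sigma(uv)-\sigma(u)|\lesssim|y_u|\,|y_v|+|y_v|^{2}+|t_v|\lesssim\rho(u,{\bf 0})\rho(v,{\bf 0})+\rho(v,{\bf 0})^{2}$. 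Under the hypothesis $\rho(g_0,h)\ge c\,\rho(g,g_0)$ with $c$ large this is $\lesssim\rho(u,{\bf 0})\rho(v,{\bf 0})$ and is far smaller than $|\sigma(u)|\approx\rho(u,{\bf 0})^{2}$, so the Euclidean segment joining $\sigma(u)$ to $\sigma(uv)$ stays in $\{|\sigma|\approx\rho(u,{\bf 0})^{2}\}$; the mean value inequality in $\R^{4}$ and the derivative bound for $s$ then give
$$|K(g,h)-K(g_0,h)|=\big|s(\sigma(uv))-s(\sigma(u))\big|\lesssim\rho(u,{\bf 0})\rho(v,{\bf 0})\cdot\rho(u,{\bf 0})^{-2(2n+2)}=\frac{\rho(g,g_0)}{\rho(g_0,h)^{Q+1}}.$$
Condition (iii) is obtained identically, writing $h^{-1}g=v'u'$ with $v'=h^{-1}h_0$ and $u'=h_0^{-1}g$ (so $\rho(v',{\bf 0})=\rho(h,h_0)$, $\rho(u',{\bf 0})=\rho(g,h_0)$), comparing $\sigma(v'u')$ with $\sigma(u')$, and invoking the side condition $\rho(g,h_0)\ge c\,\rho(h,h_0)$.

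I expect the only genuinely non-routine points to be the two structural facts recalled at the outset --- that $|\sigma(g)|\approx\rho(g,{\bf 0})^{2}$ with $\sigma$ non-vanishing off the neutral element, and the scale-invariant derivative bounds $|\partial^{\alpha}s(\sigma)|\lesssim|\sigma|^{-(2n+1)-|\alpha|}$ --- after which everything reduces to standard Calder\'on--Zygmund bookkeeping. The place where a little care is really needed is the choice of the constant $c$: it must be large enough that the Euclidean segment $[\sigma(u),\sigma(uv)]$ (respectively $[\sigma(u'),\sigma(v'u')]$) remains uniformly away from $0$, which is precisely the purpose of the side conditions $\rho(g_0,h)\ge c\,\rho(g,g_0)$ and $\rho(g,h_0)\ge c\,\rho(h,h_0)$ appearing in (ii) and (iii).
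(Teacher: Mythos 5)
Your proposal is correct, but it takes a genuinely different route from the paper's. The paper proves the size bound and the gradient bound \eqref{gradient} by differentiating the explicit binomial expansion \eqref{ssigma1} of $s$ term by term (computing $\partial s/\partial x_1$ and $\partial s/\partial x_{\alpha+1}$ explicitly and estimating each summand), and then deduces (ii)--(iii) from \eqref{gradient} via an intrinsic mean value inequality on the group (Lemma \ref{lem-mean}), built from horizontal curves and the equivalence \eqref{norm-equ} of the Carnot--Carath\'eodory metric with $\rho$. You instead exploit the composite structure $K=s\circ\sigma$ with $\sigma(g)=|y|^2+t$ and $|\sigma(g)|=\rho(g,{\bf 0})^2$, obtain all derivative bounds $|\partial^\alpha s(\sigma)|\lesssim|\sigma|^{-(2n+1)-|\alpha|}$ for free from smoothness plus homogeneity of $s$ on $\mathbb R^4\setminus\{0\}$ (compactness of the unit sphere), and get (ii)--(iii) from the Euclidean mean value theorem along the segment joining $\sigma(h^{-1}g_0)$ to $\sigma(h^{-1}g)$, after the algebraic identity $\sigma(uv)-\sigma(u)=2\operatorname{Re}\langle y_u,y_v\rangle+|y_v|^2+t_v+2\operatorname{Im}\langle y_u,y_v\rangle$ shows the perturbation is $O\big(\rho(u,{\bf 0})\rho(v,{\bf 0})\big)$, so that for $c$ large the segment stays in the region $|\sigma|\approx\rho(u,{\bf 0})^2$; the same computation with the factors in the order $v'u'$ handles (iii). (A small wording mismatch: your (ii)--(iii) do not actually follow ``from \eqref{gradient}'' but from the Euclidean derivative bound on $s$; the argument as written is nonetheless complete, and you do prove \eqref{gradient} separately by the chain rule, as the theorem requires.) What each approach buys: yours is shorter, bypasses both the explicit formula of Theorem \ref{thm-cskernel} and the group mean value lemma, and makes the $\delta_r$-homogeneity transparent; the paper's route through \eqref{gradient} and Lemma \ref{lem-mean} is more robust (it does not depend on $K$ being a smooth function of the single quaternionic variable $|y|^2+t$), and its explicit expansions of $s$ and its derivatives are reused later, e.g. in the lower-bound computation for Theorem \ref{thm-lowerbd}.
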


As a consequence, we see that the Cauchy--Szeg\"o projection $\mathcal C$ is a standard Calder\'on--Zygmund operator, falling into the scope of the framework set forth in Chapter 1, Section 6.5 in \cite{St}.
Therefore, Theorem 3 in Chapter 1 in \cite{St} ($L^p$ boundedness, $1<p<\infty$) is applicable to $\mathcal C$. Moreover,
concerning the Hardy,  BMO  and VMO spaces studied in \cite{FS} on general homogeneous groups, we have the endpoint estimate for $\mathcal C$. And from \cite{CRW,U}, we also have the boundedness and compactness of the commutator of $\mathcal C$. We formulate these results as follows.
\begin{corollary}
Let all the notation be the same as above.
\begin{itemize}
\item[(1)] $\mathcal C$ extends to a  bounded operator on $L^p(\mathscr H^{n-1})$ for $1<p<\infty$;
\item[(2)] $\mathcal C$ is  of weak type $(1,1)$;
\item[(3)] $\mathcal C$ is  bounded from $H^1(\mathscr H^{n-1}) \to L^1(\mathscr H^{n-1})$;
\item[(4)] $\mathcal C$ is  bounded from $L^\infty(\mathscr H^{n-1}) \to {\rm BMO}(\mathscr H^{n-1})$;
\item[(5)] $[b,\mathcal C]$ is bounded on  $L^p(\mathscr H^{n-1})$ for $1<p<\infty$ if $b\in {\rm BMO}(\mathscr H^{n-1})$;
\item[(6)] $[b,\mathcal C]$ is compact on  $L^p(\mathscr H^{n-1})$ for $1<p<\infty$ if $b\in {\rm VMO}(\mathscr H^{n-1})$.
\end{itemize}
\end{corollary}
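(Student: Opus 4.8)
The plan is to deduce the Corollary directly from Theorem~\ref{thm-ksize} together with the standard machinery of singular integral operators on spaces of homogeneous type, so that essentially no new computation is required; the whole point is that Theorem~\ref{thm-ksize} has been set up precisely so that $\mathcal C$ becomes a textbook Calder\'on--Zygmund operator. First I would record that $(\mathscr H^{n-1},\rho,dh)$ is a space of homogeneous type with homogeneous dimension $Q=4n+2$ (the Haar measure is the Lebesgue measure and balls satisfy $|B(g,r)|\sim r^{Q}$; this is part of the setup in Section~2 and is standard, cf.\ \cite[Chapter~1]{FS}). Next, since $\mathcal C$ is, by \eqref{cs projection gp 1}, the $L^2$-bounded operator obtained as the Cauchy--Szeg\"o projection (it is an orthogonal projection, hence bounded on $L^2(\partial\mathcal U_n)\cong L^2(\mathscr H^{n-1})$), and its off-diagonal kernel $K(g,h)$ satisfies the size bound (i) and the two H\"older-type regularity bounds (ii)--(iii) of Theorem~\ref{thm-ksize}, the pair $(\mathcal C,K)$ satisfies exactly the hypotheses of the Calder\'on--Zygmund theory on homogeneous groups as in \cite[Chapter~1, Section~6.5]{St}.

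With that identification in hand, items (1)--(4) are immediate citations. For (1) I would invoke \cite[Chapter~1, Theorem~3]{St}: an $L^2$-bounded operator with a kernel satisfying the size and regularity conditions (i)--(iii) extends to a bounded operator on $L^p$ for all $1<p<\infty$. For (2), the weak-type $(1,1)$ bound, and for (3), the $H^1\to L^1$ boundedness, and for (4), the $L^\infty\to\mathrm{BMO}$ boundedness, I would cite the corresponding Calder\'on--Zygmund endpoint results on homogeneous groups; these are standard and, as the excerpt indicates, follow from the theory in \cite{FS} once the kernel conditions are verified --- the standard Calder\'on--Zygmund decomposition adapted to the quasi-metric $\rho$ gives weak $(1,1)$, atomic decomposition of $H^1$ together with the cancellation built into the kernel regularity gives (3), and (4) is the dual statement (together with the fact that $\mathcal C$ and its formal adjoint are of the same type, the adjoint kernel being $K(h,g)$, for which (i)--(iii) hold symmetrically by the $g\leftrightarrow h$ symmetry of the hypotheses).

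For the commutator statements (5)--(6) I would cite the abstract results on spaces of homogeneous type that require, besides the Calder\'on--Zygmund structure already established, only a \emph{pointwise lower bound} on the kernel of the type furnished by Theorem~\ref{thm-cskernel} (this is exactly the role of the explicit formula and the lower bound announced in the introduction, following \cite{Hy,DGKLWY}). Concretely: (5) --- boundedness of $[b,\mathcal C]$ on $L^p$, $1<p<\infty$, for $b\in\mathrm{BMO}(\mathscr H^{n-1})$ --- follows from the upper estimate (i)--(iii) by the Coifman--Rochester--Weiss argument \cite{CRW} adapted to homogeneous type (or directly from \cite{DGKLWY}); and (6) --- compactness of $[b,\mathcal C]$ on $L^p$ for $b\in\mathrm{VMO}(\mathscr H^{n-1})$ --- follows from the Uchiyama-type criterion \cite{U} (compactness via approximation of $b$ by smooth compactly supported functions and the Fr\'echet--Kolmogorov theorem), which again needs only the Calder\'on--Zygmund kernel conditions.

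The only genuine content here beyond bookkeeping is checking that the hypotheses of the cited black-box theorems are \emph{literally} met: namely that $\mathcal C$ is $L^2$-bounded (true because it is an orthogonal projection), that $\rho$ is a genuine quasi-metric inducing the homogeneous structure on $\mathscr H^{n-1}$ with $|B(g,r)|\sim r^{Q}$ (standard for stratified groups), and that the kernel of the formal adjoint of $\mathcal C$ satisfies the same bounds (which it does, by the symmetry in $g,h$ of conditions (i)--(iii)). I expect the main --- indeed essentially the only --- subtlety to be a careful statement of which reference supplies which endpoint/commutator conclusion on a homogeneous group rather than on $\mathbb R^n$, so that items (2)--(6) are cited from the correct homogeneous-type versions; none of this requires new estimates, all the analytic work having been done in Theorems~\ref{thm-cskernel} and~\ref{thm-ksize}.
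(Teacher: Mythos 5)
Your proposal is correct and follows essentially the same route as the paper: Theorem \ref{thm-ksize} together with the $L^2$-boundedness of the projection makes $\mathcal C$ a standard Calder\'on--Zygmund operator in the framework of \cite[Chapter 1, Section 6.5]{St}, so (1)--(4) are cited from \cite{St} and the Hardy/BMO theory of \cite{FS}, while (5)--(6) follow from \cite{CRW} and \cite{U}. One minor correction: the pointwise lower bound is Theorem \ref{thm-lowerbd} (not Theorem \ref{thm-cskernel}) and is not needed for this Corollary at all, only for the converse directions in Theorem \ref{thm-commutator}, which your argument in fact implicitly acknowledges.
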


However, to obtain reverse arguments of (5) and (6) above, we still need to know more about the pointwise lower bound of the kernel $K$. To obtain the reverse of (5) we aim at verifying the kernel lower bound established by Hyt\"onen \cite{Hy}.
To obtain the reverse of (6) we aim at verifying  the same pointwise lower bound as in \cite{CDLW} in a twisted truncated sector.

\begin{theorem} \label{thm-lowerbd}
The Cauchy--Szeg\"o kernel $K(\cdot, \cdot)$ on $\mathscr H^{n-1}$ satisfies the following pointwise lower bound: there exist a large positive constant $r_0$ and a positive constant $C$ such that
for every $g\in \mathscr H^{n-1}$, there exists a ``twisted truncated sector'' $S_g\subset \mathscr H^{n-1}$ such that
$$ \inf_{g'\in S_g} \rho(g,g')=r_0 $$ and that for every $g_1\in B(g,1)$ and $g_2\in S_g$ we have
\begin{align*}
|K(g_1, g_2)|\geq  {C\over\rho(g_1,g_2)^Q}.
\end{align*}
Moreover, this sector $S_g$ is regular in the sense that $|S_g|=\infty$ and that for every
$R_2>R_1>2r_0$
$$ \big| \big(B(g,R_2)\backslash B(g,R_1)\big)  \cap S_g\big | \approx  \big| B(g,R_2)\backslash B(g,R_1)\big|$$
with the implicit constants  independent of $g$ and $R_1,R_2$.
\end{theorem}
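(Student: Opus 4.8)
The plan is to reduce the estimate to an elementary study of the function $s$ in Theorem~\ref{thm-cskernel}, to exhibit an explicit solid cone of ``directions'' in the variable $\sigma$ on which $|s(\sigma)|\gtrsim|\sigma|^{-(2n+1)}$, and then to transplant that cone to $\mathscr H^{n-1}$ by left translation, absorbing the perturbation $g_1\in B(g,1)$ with the smoothness estimate of Theorem~\ref{thm-ksize}. \textbf{Step 1 (a pointwise lower bound for $s$).} In the realisation of $\mathscr H^{n-1}$ as $\partial\mathcal U_n$ (Section~2), the convolution kernel is $K(g)=s(\sigma)$ with $\sigma=|y|^2+t$ for $g=[y,t]$ (horizontal part $y\in\mathbb H^{n-1}$, central part $t\in\operatorname{Im}\mathbb H$), so $\operatorname{Re}\sigma=|y|^2\ge0$ and $\operatorname{Im}\sigma=t$; hence $K(g_1,g_2)=K(g_2^{-1}\cdot g_1)=s(\sigma)$ with $\operatorname{Re}\sigma\ge0$ always, and $\rho(g_2^{-1}g_1,{\bf 0})^2\approx|\sigma|$, so $|\sigma|^{-(2n+1)}\approx\rho(g_1,g_2)^{-Q}$. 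Since $z=\operatorname{Re}\sigma+|\operatorname{Im}\sigma|{\bf i}$ lies in the plane $\R+\R{\bf i}$, each factor $\operatorname{Im}[\cdots]$ in \eqref{s 1} is a scalar multiple of $1$ (or of ${\bf i}$) depending only on $(\operatorname{Re}\sigma,|\operatorname{Im}\sigma|)$; a short quaternionic computation in \eqref{s 1} then shows that $|s(\sigma)|$ depends only on $(\operatorname{Re}\sigma,|\operatorname{Im}\sigma|)$ and is positively homogeneous of degree $-(2n+1)$ in $\sigma$. Evaluating \eqref{s 1} at $\operatorname{Re}\sigma=0$, where $z=|\sigma|{\bf i}$ so that $\bar z^2=-|\sigma|^2\in\R$, $z^{2n-2}\in\R$ and $z^{2n-3}\in\R{\bf i}$, the argument of the second $\operatorname{Im}[\cdots]$ bracket is real and that bracket vanishes, while the first bracket does not; this gives
\[
|s(\sigma)|=c_n\,|\sigma|^{-(2n+1)},\qquad c_n=n\,|c_{n-1}|\,(2n-2)!>0,\qquad\text{whenever }\operatorname{Re}\sigma=0 .
\]
As $s$ is smooth away from $\sigma=0$, homogeneity and continuity yield $\eta\in(0,1)$ and $c_0>0$ with $|s(\sigma)|\ge c_0\,|\sigma|^{-(2n+1)}$ whenever $0\le\operatorname{Re}\sigma\le\eta\,|\operatorname{Im}\sigma|$.

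\textbf{Step 2 (the twisted truncated sector).} Put $\Gamma:=\{w\in\mathscr H^{n-1}:\ |y(w)|^2\le\eta\,|t(w)|\}$, which depends only on $(|y(w)|,|t(w)|)$, is invariant under the (parabolic) dilations of $\mathscr H^{n-1}$, has nonempty interior, reaches ${\bf 0}$, and is preserved by inversion; note that $w\in\Gamma$ iff $\sigma(w)=|y(w)|^2+t(w)$ satisfies the inequality of Step~1. Fix a large constant $r_0>1$ (to be chosen), set $\Gamma_{r_0}:=\Gamma\cap\{\rho(\cdot,{\bf 0})\ge r_0\}$ and
\[
S_g:=\{g_2\in\mathscr H^{n-1}:\ g_2^{-1}\cdot g\in\Gamma_{r_0}\}=g\cdot(\Gamma_{r_0})^{-1}.
\]
By left-invariance $\rho(g,g_2)=\rho(g_2^{-1}g,{\bf 0})$, and since $\Gamma$ contains points of every size while $\Gamma_{r_0}$ is truncated at $r_0$, $\inf_{g_2\in S_g}\rho(g,g_2)=\inf_{w\in\Gamma_{r_0}}\rho(w,{\bf 0})=r_0$. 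The left translate of a fixed (dilation-invariant) cone is ``twisted'' because $\mathscr H^{n-1}$ is non-abelian, so $S_g$ is a ``twisted truncated sector''.

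\textbf{Step 3 (conclusion and regularity).} Let $g_1\in B(g,1)$, $g_2\in S_g$ and set $w:=g_2^{-1}g\in\Gamma_{r_0}$, so $\rho(g_2,g)=\rho(w,{\bf 0})\ge r_0$ and, by Step~1 applied to $\sigma(w)$, $|K(g,g_2)|=|s(\sigma(w))|\ge c_0\,\rho(g_2,g)^{-Q}$. Since $\rho(g,g_2)\ge r_0\ge c\ge c\,\rho(g,g_1)$ once $r_0\ge c$, Theorem~\ref{thm-ksize}(ii) with $g_0=g$, $h=g_2$ gives $|K(g_1,g_2)-K(g,g_2)|\lesssim\rho(g,g_1)/\rho(g,g_2)^{Q+1}\le r_0^{-1}\rho(g_2,g)^{-Q}$; choosing $r_0$ large (depending only on $c_0$ and the constant in Theorem~\ref{thm-ksize}) yields $|K(g_1,g_2)|\ge\tfrac12 c_0\,\rho(g_2,g)^{-Q}$. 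Finally $\rho(g,g_1)<1\le r_0\le\rho(g_2,g)$ forces $\rho(g_1,g_2)\approx\rho(g_2,g)$ by the quasi-triangle inequality, hence $|K(g_1,g_2)|\gtrsim\rho(g_1,g_2)^{-Q}$, with constants independent of $g$. For regularity, $g_2\mapsto g_2^{-1}g$ is a measure-preserving bijection of $S_g$ onto $\Gamma_{r_0}$ carrying $\rho(\cdot,g)$ to $\rho(\cdot,{\bf 0})$, so $|S_g|=|\Gamma_{r_0}|=\infty$ (a cone of positive aperture with a ball removed), and for $R_2>R_1>2r_0$ the annulus lies in $\{\rho(\cdot,{\bf 0})>r_0\}$, whence
\[
|S_g\cap(B(g,R_2)\setminus B(g,R_1))|=|\Gamma\cap(B({\bf 0},R_2)\setminus B({\bf 0},R_1))|=\kappa_\Gamma\,|B({\bf 0},R_2)\setminus B({\bf 0},R_1)|,
\]
with $\kappa_\Gamma:=|\Gamma\cap B({\bf 0},1)|/|B({\bf 0},1)|\in(0,1)$ dilation-invariant; this is the claimed regularity.

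\textbf{Main obstacle.} The only non-formal point is Step~1: unwinding the compact formula \eqref{s 1}—which deliberately conflates the quaternion unit ${\bf i}$ with a complex $i$ and uses the operator $\operatorname{Im}[\cdot]$—to see (a) that $|s|$ depends only on $(\operatorname{Re}\sigma,|\operatorname{Im}\sigma|)$, which is precisely what makes $\Gamma$ a genuine solid cone and hence $S_g$ regular, and (b) that $s$ does not vanish on $\{\operatorname{Re}\sigma=0\}$, which is transparent there because the second bracket drops out for algebraic reasons. Once Step~1 is in hand, Steps~2--3 are routine, using only Theorem~\ref{thm-ksize} and the dilation structure of $\mathscr H^{n-1}$.
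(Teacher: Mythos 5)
Your proposal is correct, but it follows a genuinely different route from the paper. The paper's proof only verifies non-vanishing of $K$ at a single point (it computes $s(1+{\bf i})\neq 0$ via the alternative grouping in \eqref{s sigma1}), then uses homogeneity and the H\"older regularity of $K$ to produce, for each ball $B(g,r)$, a companion ball $B(g_*,r)$ at distance $\approx Rr$ on which $|K(g_1,g_2)|\gtrsim \rho(g_1,g_2)^{-Q}$, and finally it \emph{cites} the construction in \cite[Theorem 1.1]{CDLW} to upgrade this to the twisted truncated sector $S_g$ with the stated measure regularity. You instead make the sector completely explicit: you evaluate $s$ on the whole slice $\{\operatorname{Re}\sigma=0\}$ — and your computation is right: the second bracket in \eqref{s 1} has real argument there (equivalently, in \eqref{ssigma1} the alternating sum $\sum_{k=0}^{2n-3}(2n-k-2)(k+1)(-1)^k$ vanishes), while the first sum gives $\sum_{k=0}^{2n-2}(2n-k-1)(k+1)(-1)^k=n$, so $|s(\sigma)|=n|c_{n-1}|(2n-2)!\,|\sigma|^{-(2n+1)}$ exactly — then pass by continuity, compactness of the unit $2$-sphere in $\operatorname{Im}\mathbb H$, and degree $-(2n+1)$ homogeneity to the cone $\{0\le\operatorname{Re}\sigma\le\eta|\operatorname{Im}\sigma|\}$, take $S_g=g\cdot(\Gamma_{r_0})^{-1}$ for the dilation-invariant cone $\Gamma=\{|y|^2\le\eta|t|\}$, absorb the perturbation $g_1\in B(g,1)$ with Theorem \ref{thm-ksize}(ii) by taking $r_0$ large, and read off the measure regularity from dilation invariance of $\Gamma$. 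This buys a self-contained argument with explicit constants and an explicit sector (no appeal to \cite{CDLW}), at the price of depending on the fine algebraic structure of the explicit formula; the paper's route needs only one non-zero value plus the general CDLW machinery. Two small points you should state explicitly: $c_{n-1}\neq 0$ (otherwise the kernel would be trivial), and that $g_2\mapsto g_2^{-1}\cdot g$ preserves Haar (Lebesgue) measure — it is inversion followed by a right translation on a unimodular group — which is what makes $|S_g|=|\Gamma_{r_0}|$ and the annulus identities legitimate; also, your parenthetical claim that $|s(\sigma)|$ depends only on $(\operatorname{Re}\sigma,|\operatorname{Im}\sigma|)$ is true (e.g.\ it follows from $s(q\sigma\bar q)=q\,s(\sigma)\,\bar q$ for unit quaternions $q$, or from \eqref{s 1} since the prefactor ${\bf i}(z-\bar z)^{-3}$ is real and both brackets are scalar), but it is not strictly needed because your explicit evaluation already covers the entire sphere $\{\operatorname{Re}\sigma=0,\ |\operatorname{Im}\sigma|=1\}$.
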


\color{black}

By using Theorem \ref{thm-lowerbd}, we now establish the boundedness and compactness of the commutator of $\mathcal C$ with respect to
${\rm BMO}(\mathscr H^{n-1})$ and ${\rm VMO}(\mathscr H^{n-1})$, following the ideas and approaches in \cite{Hy} (see also \cite{DGKLWY}) and  in \cite{CDLW,DLLWW}, respectively.
\begin{theorem}\label{thm-commutator}
Suppose $1<p<\infty$ and $b\in L^1_{loc}(\mathscr H^{n-1})$.

{\rm(i)} $b\in {\rm BMO}(\mathscr H^{n-1})$ if and only if $[b,\mathcal C]$ is bounded on $L^p(\mathscr H^{n-1})$.

{\rm(ii)} $b\in {\rm VMO}(\mathscr H^{n-1})$ if and only if $[b,\mathcal C]$ is compact on $L^p(\mathscr H^{n-1})$.

\end{theorem}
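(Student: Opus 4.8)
The plan is to deduce Theorem~\ref{thm-commutator} from the Calder\'on--Zygmund structure of $\mathcal C$ (Theorem~\ref{thm-ksize}) together with the pointwise lower bound (Theorem~\ref{thm-lowerbd}), running the two-sided scheme of Hyt\"onen~\cite{Hy} (see also \cite{DGKLWY}) for part (i) and of \cite{CDLW,DLLWW} for part (ii), all carried out on the homogeneous group $\mathscr H^{n-1}$ with the quasi-metric $\rho$. The sufficiency direction of (i) --- $b\in{\rm BMO}(\mathscr H^{n-1})\Rightarrow[b,\mathcal C]$ bounded on $L^p$ --- is exactly item~(5) of the Corollary: it is the Coifman--Rochberg--Weiss argument, which on a homogeneous group uses only the size and smoothness bounds (i)--(iii) of Theorem~\ref{thm-ksize} and the $L^p$-boundedness of $\mathcal C$, so nothing new is needed.

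For the necessity direction of (i) I would run the classical lower-bound argument with Theorem~\ref{thm-lowerbd} replacing the usual ``constant sign on a distant ball'' hypothesis. Fix a ball $B=B(g_0,r)$. Using the dilations of $\mathscr H^{n-1}$ and the homogeneity of the Cauchy--Szeg\"o kernel, it suffices to treat $r$ comparable to the scale at which Theorem~\ref{thm-lowerbd} is stated; the twisted truncated sector $S_{g_0}$ then supplies a set $\widetilde B\subset S_{g_0}$ with $|\widetilde B|\approx|B|$, $\rho(\widetilde B,B)\approx r$, on which $|K(g,h)|\gtrsim\rho(g,h)^{-Q}\approx|B|^{-1}$ for $g\in B,\ h\in\widetilde B$. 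Since $K$ is quaternion-valued there is no literal sign, so instead one uses the smoothness estimates (ii)--(iii) of Theorem~\ref{thm-ksize} to show that the unit quaternion $K(g,h)/|K(g,h)|$ varies by at most a small constant over $B\times\widetilde B$, provided $\rho(\widetilde B,B)$ is chosen a large multiple of $r$. Testing $[b,\mathcal C]$ on $\mathbf 1_{\widetilde B}$ and writing $[b,\mathcal C]\mathbf 1_{\widetilde B}(g)=\int_{\widetilde B}K(g,h)(b(g)-b(h))\,dh=w(g)\big(b(g)-m_{\widetilde B}(g)\big)$ with $w(g)=\int_{\widetilde B}|K(g,h)|\,dh\approx 1$ and $m_{\widetilde B}(g)$ a $|K(g,\cdot)|$-weighted average of $b$ over $\widetilde B$, H\"older's inequality gives $\tfrac1{|B|}\int_B|b-m_{\widetilde B}|\lesssim\|[b,\mathcal C]\|_{L^p\to L^p}$; controlling the $g$-dependence of $m_{\widetilde B}$ by the smoothness of $K$ (so $m_{\widetilde B}$ may be replaced by a genuine constant up to an acceptable error) and the sector regularity (so the construction is uniform over $g_0$) yields $b\in{\rm BMO}(\mathscr H^{n-1})$ with the quantitative bound.

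For part (ii), sufficiency follows by approximation: using the characterization of ${\rm VMO}(\mathscr H^{n-1})$ as the ${\rm BMO}$-closure of $C_c^\infty(\mathscr H^{n-1})$, for $b\in C_c^\infty$ the operator $[b,\mathcal C]$ is compact on $L^p$ --- its kernel $K(g,h)(b(g)-b(h))$ inherits from Theorem~\ref{thm-ksize}, together with the compact support of $b$ and the diagonal cancellation $b(g)-b(h)=O(\rho(g,h))$, enough decay and equicontinuity for a Fr\'echet--Kolmogorov/Arzel\`a--Ascoli argument --- and then for general $b\in{\rm VMO}$, approximating by $b_\delta\in C_c^\infty$ in ${\rm BMO}$ and using the bound from (i) exhibits $[b,\mathcal C]$ as a norm-limit of compact operators, hence compact. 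For necessity I would argue by contradiction through the three defining conditions of ${\rm VMO}$ (vanishing mean oscillation as the radius tends to $0$, to $\infty$, and as the ball escapes to spatial infinity), following \cite{CDLW,DLLWW}: if one of them fails there is a sequence of balls $B_k$ with mean oscillation bounded below; using the companion sectors $S_{g_k}$ from Theorem~\ref{thm-lowerbd} and the weighted-average identity above, one builds $L^p$-normalized $f_k$ supported in $\widetilde B_k\subset S_{g_k}$ for which $\{[b,\mathcal C]f_k\}$ has no $L^p$-convergent subsequence (the relevant mass shrinks or runs off to infinity while $\|[b,\mathcal C]f_k\|_{L^p}$ stays bounded below), contradicting compactness. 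The property $|S_g|=\infty$ and the proportional measure of $S_g$ in every annulus above scale $r_0$ is exactly what makes these test functions available uniformly in $g$.

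The substantive difficulty is the necessity half of both parts, and within it the interface between the quaternion-valued kernel and the lower bound: one must replace the scalar ``constant sign'' device by quantitative control of the unit-quaternion direction $K/|K|$ through the smoothness estimates, do this on the geometrically awkward twisted truncated sector rather than on a ball, and obtain constants uniform in the base point so that both the ${\rm BMO}$ lower bound and the three ${\rm VMO}$ obstructions come out with the required uniformity. Reducing Theorem~\ref{thm-lowerbd} from a single scale to all scales via the dilations of $\mathscr H^{n-1}$ is routine but must be handled with care, since the sector $S_g$, not merely the kernel, transforms under dilation.
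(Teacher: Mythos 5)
Your outline is correct and follows the same architecture as the paper, but the paper's own proof is purely a citation sketch, and at the two places where real work is needed your devices differ from the ones the paper points to. For the quaternion-valued ``sign'' issue, the paper's Remark after Theorem \ref{thm-lowerbd} writes $K=K_1+K_2{\bf i}+K_3{\bf j}+K_4{\bf k}$ and asserts, via Hyt\"onen's idea, that at least one real component keeps a fixed sign together with the full lower bound \eqref{e-assump cz ker low bdd} on paired balls $B,\widetilde B$; this scalar, sign-definite kernel is then fed unchanged into the arguments of \cite{Hy} (for (i)) and into the Lacey $\ell^p(\mathbb N)$-basis argument of \cite{DLLWW} (for the ``only if'' part of (ii)). You instead control the unit direction $K/|K|$ by the smoothness estimates of Theorem \ref{thm-ksize} at large separation; this is essentially equivalent (the component statement is itself obtained by exactly such a continuity/large-separation argument, and the construction in Step 3 of the proof of Theorem \ref{thm-lowerbd} does allow the separation parameter to be taken large, so the relative variation of $K/|K|$ is $O(1/R)$ against a lower bound with constant independent of $R$), and it has the advantage of treating the quaternion kernel directly; the paper's route has the advantage that the scalar proofs of \cite{Hy,DLLWW} apply verbatim. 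Likewise, for the necessity in (ii) you run the CDLW-style contradiction through the three defining conditions of ${\rm VMO}$ with test functions in the sectors, while the paper invokes the ``no bounded $T$ with $Te_j=Te_k\neq0$'' idea from \cite{DLLWW}; both are accepted routes and rely on the same lower bound and sector regularity.

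One caveat in your (i)-necessity step: the identity $[b,\mathcal C]\mathbf 1_{\widetilde B}(g)=w(g)\big(b(g)-m_{\widetilde B}(g)\big)$ is exact only if the direction of $K(g,\cdot)$ is literally constant; with only approximate constancy the error is of size $\delta\int_{\widetilde B}|K(g,h)|\,|b(g)-b(h)|\,dh$, which is controlled by the mean oscillation of $b$ over \emph{both} $B$ and $\widetilde B$ and cannot be absorbed into $|b(g)-m_{\widetilde B}(g)|$ alone (nor into a supremum you do not yet know is finite, since a priori $b$ is only $L^1_{loc}$). This is repaired by the median splitting or the approximate weak factorization of \cite{Hy} (equivalently, by also testing against $\mathbf 1_B$ and bootstrapping), which is precisely what the paper's reference to \cite{Hy,DGKLWY} supplies; with that supplement your plan goes through.
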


Thus, we will only sketch the main approach of the proof of Theorem \ref{thm-commutator}.

In the next section we will provide some necessary preliminaries on quaternionic Heisenberg groups, and in the last section we provide the proofs of our main results.

\section{Preliminaries}



Recall that the space $\mathbb H$ of quaternion numbers forms a division algebra with respect to the coordinate addition and the quaternion multiplication
\begin{align*}
x x'&=(x_1+x_2{\bf i}+x_3{\bf j}+x_4{\bf k})(x'_1+x'_2{\bf i}+x'_3{\bf j}+x'_4{\bf k})\\
&=x_1x'_1-x_2x'_2-x_3x'_3-x_4x'_4
+\left(x_1x'_2+x_2x'_1+x_3x'_4-x_4x'_3 \right){\bf i}\\
&\quad +\left(x_1x'_3-x_2x'_4+x_3x'_1+x_4x'_2 \right){\bf j}
+\left(x_1x'_4+x_2x'_3-x_3x'_2+x_4x'_1 \right){\bf k},
\end{align*}
for any $x=x_1+x_2{\bf i}+x_3{\bf j}+x_4{\bf k}$, $x'=x'_1+x'_2{\bf i}+x'_3{\bf j}+x'_4{\bf k}\in \mathbb H$.
The conjugate $\bar x$ is defined by
$$\bar x=x_1-x_2{\bf i}-x_3{\bf j}-x_4{\bf k},$$
 and the modulus $|x|$ is defined by
$$|x|^2=x \bar x=\sum_{j=1}^4x_j^2.$$
 The conjugation inverses the product of quaternion number in the following sense $\overline{q \sigma}=\bar \sigma \bar{q}$ for any $q, \sigma\in\mathbb H$.
It is clear that
\begin{equation}\begin{split}\label{im}
\operatorname{Im}(\bar{x}x')&=\operatorname{Im}\{(x_1-x_2{\bf i}-x_3{\bf j}-x_4{\bf k})(x'_1+x'_2{\bf i}+x'_3{\bf j}+x'_4{\bf k})\}\\
&=\left(x_1x'_2-x_2x'_1-x_3x'_4+x_4x'_3 \right){\bf i}
+\left(x_1x'_3+x_2x'_4-x_3x'_1-x_4x'_2 \right){\bf j}\\
&\quad
+\left(x_1x'_4-x_2x'_3+x_3x'_2-x_4x'_1 \right){\bf k}\\
&=:\sum_{\alpha=1}^3\sum_{k,j=1}^4b_{kj}^\alpha x_kx'_j{\bf i}_\alpha,
\end{split}
\end{equation}
where ${\bf i}_1={\bf i}, {\bf i}_2={\bf j}, {\bf i}_3={\bf k}$, and $b_{kj}^\alpha$ is the $(k,j)$ th entry of the following matrices $b^\alpha$:
\begin{equation*}
b^1:=\left( \begin{array}{cccc}
0&1 & 0 & 0\\
-1 & 0 & 0&0\\
0 & 0 & 0&-1\\
0 & 0 & 1&0
\end{array}
\right ),
\quad
b^2:=\left( \begin{array}{cccc}
0&0 & 1 & 0\\
0 & 0 & 0&1\\
-1 & 0 & 0&0\\
0 & -1 & 0 &0
\end{array}
\right ),
\quad
b^3:=\left( \begin{array}{cccc}
0&0 & 0 & 1\\
0 & 0 & -1&0\\
0 & 1 & 0&0\\
-1 & 0 & 0 &0
\end{array}
\right ).
\end{equation*}

The quaternion space $\mathbb H^n$ is isomorphic to $\mathbb R^{4n}$ as a real vector space and the pure imaginary $\operatorname{Im}\mathbb H$ are isomorphic to $\mathbb R^3$.

 The quaternionic Heisenberg group $\mathscr H^{n-1}$  is the space $\mathbb R^{4n-1}=\mathbb R^3\times\mathbb R^{4(n-1)}$, which is isomorphic to $\operatorname{Im}\mathbb H\times \mathbb H^{n-1}$ endowed with the non-commutative product
\begin{align}\label{prod}
(t,y)\cdot (t',y')=\left(t+t'+2\operatorname{Im}\langle y, y'\rangle, y+y'\right),
\end{align}
where $t=t_1{\bf i}+t_2{\bf j}+t_3{\bf k}$, $t'=t'_1{\bf i}+t'_2{\bf j}+t'_3{\bf k}\in \operatorname{Im}\mathbb H$, $y, y'\in\mathbb H^{n-1}$, and $\langle \cdot, \cdot\rangle$ is the inner product defined by
$$\langle y, y'\rangle=\sum_{l=1}^{n-1}\overline y_l y'_l, \quad y=(y_1,\cdots, y_{n-1}),\quad y'=(y'_1,\cdots, y'_{n-1})\in\mathbb H^{n-1}.$$
It is easy to check that the identity of $\mathscr H^{n-1}$ is the origin ${\bf 0}:=(0,0)$, and the inverse of $(t, y)$ is given by $(-t, -y)$.

The boundary of quaternionic Siegel upper half-space $\partial \mathcal U_n$ can be identified with
 the quaternionic Heisenberg group $\mathscr H^{n-1}$  via  the projection
\begin{align}\label{p}
\pi: \partial\mathcal U_n &\longrightarrow \operatorname{Im}\mathbb H\times\mathbb H^{n-1},\\
\nonumber(|q'|^2+x_2{\bf i}+x_3{\bf j}+x_4{\bf k}, q')&\longmapsto (x_2{\bf i}+x_3{\bf j}+x_4{\bf k}, q').
\end{align}
Let $d\beta$ be the Lebesgue measure on $\partial\mathcal U_n$ obtained by pulling back the Haar measure on the group $\mathscr H^{n-1}$ by the projection $\pi$.



The Hardy space $\mathcal H^2(\mathcal U_n)$ consists of all regular functions $F$ on $\mathcal U_n$, for which
$$\|F\|_{\mathcal H^2(\mathcal U_n)}:=\Big(\sup_{\varepsilon>0}\int_{\partial \mathcal U_n}|F_\varepsilon(q)|^2d\beta(q)\Big)^{1\over 2}<\infty.$$
According to \cite[Theorem 4.1]{CMW}, a function $F\in \mathcal H^2(\mathcal U_n)$ has boundary value $F^b$ that belongs to $L^2(\mathcal U_n)$.

By \eqref{im}, the multiplication of the quaternionic Heisenberg group in terms of real variables can be written as (cf. \cite{shi-wang})
\begin{align}\label{law}
(t,y)\cdot(t', y')=\bigg(t_\alpha+t'_\alpha+2\sum_{l=0}^{n-1}\sum_{j,k=1}^4b_{kj}^\alpha y_{4l+k}y'_{4l+j}, y+y' \bigg),
\end{align}
where
$t=(t_1, t_2, t_3)$, $t'=(t'_1, t'_2, t'_3)\in\mathbb R^3$, $\alpha=1,2,3$, $y=(y_1,y_2,\cdots, y_{4n-4})$, $y'=(y'_1,y'_2,\cdots, y'_{4n-4})\in\mathbb R^{4n-4}$.

 The following vector fields are left invariant on the quaternionic Heisenberg group by the multiplication laws of the quaternionic Heisenberg group in \eqref{law}:
\begin{align}\label{Y}
 Y_{4l+j} ={\partial\over \partial y_{4l+j}} + 2 \sum_{\alpha=1}^3\sum_{k=1}^4 b_{kj}^\alpha y_{4l+k} {\partial\over\partial t_\alpha},
 \end{align}
and
 $$[Y_{4l'+k}, Y_{4l+j}]=2\delta_{ll'}\sum_{\alpha=1}^3b_{kj}^\alpha{\partial\over \partial t_\alpha},$$
for $l, l'=0,\cdots,n-2$, $j,k=1,\cdots 4$.
 Then the horizontal tangent space at $g\in\mathscr H^{n-1}$, denoted by $H_g$, is spanned by the left invariant vectors $Y_1(g),\cdots, Y_{4n-4}(g)$. For each $g\in\mathscr H^{n-1}$, we fix a quadratic form
 $\langle \cdot ,\cdot\rangle_H$ on $H_g$ with respect to which the vectors $Y_1(g),\cdots, Y_{4n-4}(g)$
 are orthonormal.


For any $p=(t,y)\in \mathscr H^{n-1}$, we can associate the automorphism $\tau_p$ of $\mathcal U_n$:
\begin{align}\label{tau h}
\tau_p: (q_1,q')\longmapsto\left(q_1+|y|^2+t+2\langle y, q' \rangle, q'+y\right).
\end{align}
It is obviously extended to the boundary $\partial\mathcal U_n$. It is easy to see that the action on $\partial \mathcal U_n$ is transitive. In particular, we have
$$\tau_p: (0,0)\longmapsto (|y|^2+t, y).$$
 And we can write each $q\in\partial \mathcal U_n$ as $q=\tau_g(0)$ for a unique $g\in \mathscr H^{n-1}$. In this correspondence we have that
{$d\beta(q)=dg$}, the invariant measure on $\mathscr H^{n-1}$. Similarly, we write $p\in\partial \mathcal U_n$ in the form $p=\tau_h(0)$ for some unique $h\in\mathscr H^{n-1}$. Then from \eqref{tau h} we can see that
 \begin{align*}
 S(q+\varepsilon {\bf e}, p)&=S\big(\tau_{h^{-1}}(q+\varepsilon {\bf e}), \tau_{h^{-1}}(p)\big)
 =S\big(\tau_{h^{-1}}(q)+\varepsilon {\bf e}, 0\big)
 =S\big(\tau_{h^{-1}\cdot g}(0)+\varepsilon {\bf e}, 0\big).
 \end{align*}
 Take $K_\varepsilon(g)=S(\tau_g(0)+\varepsilon {\bf e},0)$, and denote by
 {$f(h):=f(\tau_h(0))=f(p)$   and
 $(\mathcal C f)(g):=(\mathcal C f)(\tau_g(0))=(\mathcal C f)(q)$ by abuse of notations}.
 Then
 \begin{align}\label{kepsilon}
 (\mathcal C f)(g)=\lim_{\varepsilon\to 0}\int_{\mathscr H^{n-1}}K_{\varepsilon}(h^{-1}\cdot g)f(h)dh,\quad f\in L^2(\mathscr H^{n-1}),
 \end{align}
 where the limit is taken in $L^2(\mathscr H^{n-1})$.

 Recall that the convolution on $\mathscr H^{n-1}$ is defined as
 $$(f*\tilde f)(g)=\int_{\mathscr H^{n-1}}f(h)\tilde f(h^{-1}\cdot g)dh.$$
Therefore, \eqref {kepsilon} can be formally rewritten as
$$ (\mathcal C f)(g)=(f*K)(g),$$
where $K$ is the distribution given by
$\lim_{\varepsilon\to 0}K_{\varepsilon}.$
Thus, if $g=(t, y)\in \mathscr H^{n-1}$ with $t=t_1{\bf i}+t_2{\bf j}+t_3{\bf k}$, then
\begin{align}\label{kg}
K(g)=\lim_{\varepsilon\to 0}K_{\varepsilon}(g)=s(|y|^2+t).
\end{align}


For any $g=(t, y)\in \mathscr H^{n-1}$, the homogeneous norm of $g$ is defined by
$$\|g\|=\left(|y|^4+\sum_{j=1}^3|t_j|^2\right)^{1\over 4}.$$
Obviously,
$\|g^{-1}\|=\|-g\|=\|g\|$ and $\|\delta_r(g)\|=r\|g\|$, where $\delta_r, r>0,$ is the dilation on $\mathscr H^{n-1}$, which is defined as
$$\delta_r(t,y)=(r^2t, ry).$$

On $\mathscr H^{n-1}$, we define the quasi-distance $$\rho(h,g)=\|g^{-1}\cdot h\|.$$ It is clear that
 $\rho$ is symmetric and satisfies the generalized triangle inequality
\begin{align}\label{rho}
\rho(h,g)\leq C_\rho(\rho(h,w)+\rho(w,g)),
\end{align}
for any $h, g, w\in \mathscr H^{n-1}$ and some $C_\rho>0$.
 Using $\rho$, we define the balls $B(g,r)$ in $\mathscr H^{n-1}$ by $B(g,r)=\{h: \rho(h,g)<r \}$. Then
$$|B(g,r)|\approx r^Q.$$

We now recall the Hardy, BMO and VMO spaces.
Note that $\mathscr H^{n-1}$ falls into the scope of homogeneous group, and hence we have the natural  BMO space (also the Hardy space) in this setting due to Folland and Stein \cite{FS}. To be self-enclosed, we recall the definition of the BMO space.
$${\rm BMO}(\mathscr H^{n-1})=\{ b\in L^1_{loc}(\mathscr H^{n-1}): \|b\|_{ {\rm BMO}(\mathscr H^{n-1}) }<\infty \},$$
where
$$ \|b\|_{ {\rm BMO}(\mathscr H^{n-1}) }= \sup\limits_{B} {1\over |B|} \int_{B} | b(g)-b_B |dg,$$
 where the supremum is taken over all balls $B\subset \mathscr H^{n-1}$, and $b_B$ is the average of $b$ over the ball $B$. Similarly we also have the VMO space on $\mathscr H^{n-1}$, which is closure of the $C^\infty_c(\mathscr H^{n-1})$ under the norm of $\|\cdot\|_{ {\rm BMO}(\mathscr H^{n-1}) }$, see \cite{CDLW} for more details of the definition and properties of this VMO space in the more general setting, the stratified Lie group.


\section{Proof of main results}

\begin{proof}[Proof of Theorem \ref{thm-cskernel}]

Note that for $\sigma =  x_1+x_2{\bf i}+x_3{\bf j}+x_4{\bf k} $, we have
 $|\sigma|^2 =x_1^2+x_2^2+x_3^2+x_4^2$, and
$\bar \sigma =  x_1-x_2{\bf i}-x_3{\bf j}-x_4{\bf k} $. By Theorem A,
\begin{align}\label{s sigma}
s(\sigma)&=c_{n-1}{\partial^{2(n-1)}\over \partial x_1^{2(n-1)}}{{\overline \sigma}\over |\sigma|^4}\\
&= c_{n-1}{\partial^{2(n-1)}\over \partial x_1^{2(n-1)}}\bigg({ 1\over |\sigma|^4} \bigg)  {\overline \sigma} +
{(2n-2)}c_{n-1}{\partial^{2n-3}\over \partial x_1^{2n-3}}\bigg({ 1\over |\sigma|^4}\bigg) .\nonumber
\end{align}

Now it suffices to figure out the representation of
$$ {\partial^{l}\over \partial x_1^{l}}\bigg({ 1\over |\sigma|^4} \bigg),  $$
where $l$ is any fixed natural number.

To see this, let $z =x_1+ |\operatorname{Im}\sigma|{\bf i} $, and then
$\bar z = x_1- |\operatorname{Im}\sigma|{\bf i}$. Hence, we further have
$$  |\sigma|^2 = z \bar z.$$

As a consequence, we have
\begin{align*}
{\partial^{l}\over \partial x_1^{l}}\bigg({ 1\over |\sigma|^4} \bigg) = {\partial^{l}\over \partial x_1^{l}}\bigg({ 1\over z^2 \bar z^2} \bigg).
\end{align*}
By using the binomial expansion, we obtain that
\begin{align*}
{\partial^{l}\over \partial x_1^{l}}\bigg({ 1\over |\sigma|^4} \bigg) &=
\sum_{k=0}^l C_l^k {\partial^{l-k}\over \partial x_1^{l-k}} \bigg( {1\over z^2} \bigg)
{\partial^{k}\over \partial x_1^{k}} \bigg( {1\over \bar z^2} \bigg),
\end{align*}
where
$$ C_l^k = { l! \over k! (l-k)!}. $$
Next, by collating the coefficients, we further have
\begin{align}\label{derivative}
{\partial^{l}\over \partial x_1^{l}}\bigg({ 1\over |\sigma|^4} \bigg)
&=\sum_{k=0}^l   (-1)^l  l! (l-k+1)(k+1)   \bigg( {1\over z^{l-k+2}} \bigg)
 \bigg( {1\over \bar z^{k+2}} \bigg).
\end{align}
Then the key step is to obtain an explicit formula for the above summation.
To see this, we rewrite the right-hand side of \eqref{derivative} as follows.
\begin{align*}
{\rm RHS\ of\ } \eqref{derivative}
 &={ (-1)^l l! \over z^l |z|^4 }\sum_{k=0}^l    (l-k+1)(k+1)
 \bigg( {z\over \bar z} \bigg)^{k}
 =:{ (-1)^l l! \over z^l |z|^4 } \ {\bf A} \bigg( {z\over \bar z} \bigg).
\end{align*}
Now let $w = {z\over \bar z}$, then
\begin{align*}
 {\bf A}(w) &=\sum_{k=0}^l    (l-k+1)(k+1)
w^{k}.
\end{align*}
Based on the definition of $ {\bf A}(w) $, we now consider the following function
$$ {\bf G}(w):= \sum_{k=0}^l    (l-k+1)w^{k+1}.$$
It is obvious that $${d\over dw} {\bf G}(w) ={\bf A}(w). $$
To continue, we now set $\zeta = w^{-1}$. Then by changing of variable we have
$$  {\bf G}\Big({1\over\zeta}\Big) =  \sum_{k=0}^l    (l-k+1)\zeta^{-k-1}. $$
Next, we set
$$ {\bf H} (\zeta):= \zeta^{l+1}  {\bf G}\Big({1\over\zeta}\Big) =\sum_{k=0}^l    (l-k+1)\zeta^{l-k}.$$
Again, based on the definition of ${\bf H}$, we now consider the following function
$${\bf F}(\zeta):=  \sum_{k=0}^l    \zeta^{l-k+1} ,$$
and it is obvious that
$$ {d\over d\zeta}{\bf F}(\zeta) = {\bf H}(\zeta).$$

Now by taking the summation we have
\begin{align*}
{\bf F}(\zeta)=  \zeta^{l+1} \sum_{k=0}^l    \zeta^{-k}= { \zeta (\zeta^{l+1}-1)\over \zeta-1  },
\end{align*}
and hence
\begin{align*}
{\bf H}(\zeta)&={d\over d\zeta} \bigg( { \zeta (\zeta^{l+1}-1)\over \zeta-1  }\bigg)
 = { (l+1)\zeta^{l+2} -(l+2)\zeta^{l+1} +1 \over (\zeta-1)^2  }.
\end{align*}
As a consequence, we get that
\begin{align*}
{\bf G}(w)&= { (l+1)w\over (1-w)^2  } -  { (l+2)w^2\over (1-w)^2  } +  { w^{l+3}\over (1-w)^2  }.
\end{align*}
We now have
\begin{align*}
{\bf A}(w)&= {d\over dw}\bigg({ (l+1)w\over (1-w)^2  } -  { (l+2)w^2\over (1-w)^2  } +  { w^{l+3}\over (1-w)^2  }\bigg)\\
&={ (l+1)-(l+3) w + (l+3)w^{l+2} -(l+1)w^{l+3} \over (1-w)^3}.
\end{align*}
By changing $w$ back to ${z\over \bar z}$, we have
\begin{align*}
{\bf A}\Big({z\over \bar z}\Big)&=
{ (l+1) {z^{l+3} \over \bar z^l}- (l+3){ z^{l+2} \over \bar z^{l-1} }  + (l+3) z\, \bar z^2 -(l+1) \bar z^3  \over ( z-\bar z )^3   }
\end{align*}

Hence,
\begin{align*}
{\rm RHS\ of\ } \eqref{derivative}
 &={ (-1)^l\ l! \over |z|^4 } \ \, { (l+1) {z^{3} \over \bar z^l}- (l+3){ z^{2} \over \bar z^{l-1} }  + (l+3){ \bar z^2 \over z^{l-1}} -(l+1) {\bar z^3 \over z^l }\over ( z-\bar z )^3   }\\
 &={ (-1)^l\ l! \over |z|^4\ ( z-\bar z )^3 } \ \bigg( - 2(l+1) {\rm Im}\Big( {\bar z^{3} \over  z^l}\Big)\, {\bf i}+ 2(l+3) {\rm Im}\Big({ \bar z^{2} \over  z^{l-1} }\Big)\, {\bf i} \bigg)\\
 &= {4 (-1)^l \ l! \over |z|^4  (z-\bar z)^3 }\operatorname{Im}\bigg[ {\bar z^2\over z^l}\big( z+(l+1){z-\bar z\over2} \big) \bigg]{\bf i}.
\end{align*}

\medskip


Based on the representation in \eqref{derivative} above, from \eqref{s sigma}, we obtain that
\begin{align}\label{s sigma1}
s(\sigma)&= c_{n-1} {4  (2n-2)! \over |z|^4    (z-\bar z)^3 }\operatorname{Im}\bigg[ {\bar z^2\over z^{2n-2}}\big( z+ (2n-1){z-\bar z\over2} \big)  \bigg] {\bf i} {\overline \sigma} \nonumber\\
&\quad-
{(2n-2)}  c_{n-1} {4  (2n-3)! \over |z|^4    (z-\bar z)^3 }\operatorname{Im} \bigg[ {\bar z^2\over z^{2n-3}}\big( z+ (2n-2){z-\bar z\over2} \big) \bigg]  {\bf i}\\
&= c_{n-1} {4  (2n-2)! \over |z|^4    (z-\bar z)^3 }\bigg\{\operatorname{Im}\bigg[ {\bar z^2\over z^{2n-2}}\big( z+ (2n-1){z-\bar z\over2} \big)  \bigg] {\bf i} {\overline \sigma}\nonumber\\
&\qquad-\operatorname{Im} \bigg[ {\bar z^2\over z^{2n-3}}\big( z+ (2n-2){z-\bar z\over2} \big) \bigg]  {\bf i}\bigg\}.  \nonumber
\end{align}
The proof of Theorem \ref{thm-cskernel} is complete.
\end{proof}

 Based on the result of Theorem  \ref{thm-cskernel} we see that the
 kernel $K$ of the Cauchy--Szeg\"o projection operator $\mathcal C$ on $\mathscr H^{n-1}$
 is homogeneous. To be more specific, we have the following.
 \begin{corollary}\label{cor homo}
 The kernel $K$ of the Cauchy--Szeg\"o projection operator $\mathcal C$ on $\mathscr H^{n-1}$
 satisfies
 $$ K( \delta_r (g) ) = r^{-Q} K(g) $$
 for every $g\in \mathscr H^{n-1}$ and $r>0$.
 \end{corollary}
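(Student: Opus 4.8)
The plan is to reduce the statement to a one‑variable scaling identity for the function $s$ on $\mathbb H$ and then read off the homogeneity degree from the formula in Theorem A (equivalently from \eqref{s 1}).

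First I would recall from \eqref{kg} that $K(t,y)=s(|y|^2+t)$ and from the definition of the dilation that $\delta_r(t,y)=(r^2t,ry)$. Hence
\begin{align*}
K\big(\delta_r(t,y)\big)=s\big(|ry|^2+r^2t\big)=s\big(r^2(|y|^2+t)\big),
\end{align*}
so the corollary follows once one shows the scaling identity $s(\lambda\sigma)=\lambda^{-(2n+1)}s(\sigma)$ for all $\lambda>0$ and $\sigma\in\mathbb H$: taking $\lambda=r^2$ then produces the factor $r^{-2(2n+1)}=r^{-(4n+2)}=r^{-Q}$, since $Q=4n+2$.

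Second, to establish this scaling identity I would use the formula $s(\sigma)=c_{n-1}\,\partial_{x_1}^{2(n-1)}\big(\bar\sigma/|\sigma|^4\big)$ from Theorem A. Under the scaling $\sigma\mapsto\lambda\sigma$ (which replaces each real coordinate $x_j$, in particular $x_1$, by $\lambda x_j$) the function $\bar\sigma/|\sigma|^4$ is positively homogeneous of degree $1-4=-3$, and applying $\partial_{x_1}^{2(n-1)}$ lowers this degree by exactly $2(n-1)$, yielding degree $-3-2(n-1)=-(2n+1)$, which is precisely the claim. Alternatively one can verify the same fact directly from the explicit expression \eqref{s 1}: under $\sigma\mapsto\lambda\sigma$ one has $z\mapsto\lambda z$, $\bar z\mapsto\lambda\bar z$, $\bar\sigma\mapsto\lambda\bar\sigma$, so $|z|^{-4}$ contributes $\lambda^{-4}$, $(z-\bar z)^{-3}$ contributes $\lambda^{-3}$, the bracket $\operatorname{Im}[\bar z^2 z^{-(2n-2)}(z+(2n-1)\tfrac{z-\bar z}{2})]\bar\sigma$ contributes $\lambda^{2-(2n-2)+1+1}=\lambda^{6-2n}$, and the bracket $\operatorname{Im}[\bar z^2 z^{-(2n-3)}(z+(2n-2)\tfrac{z-\bar z}{2})]$ contributes $\lambda^{2-(2n-3)+1}=\lambda^{6-2n}$; the net factor in both summands is $\lambda^{-4-3+6-2n}=\lambda^{-(2n+1)}$.

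There is no genuine obstacle here; the only point needing a little care is to match the isotropic scaling $\sigma\mapsto\lambda\sigma$ on $\mathbb H$ with the parabolic dilation $\delta_r$ on $\mathscr H^{n-1}$ — the square $r^2$ on the central variable $t$ and on $|y|^2$ is exactly what converts homogeneity degree $-(2n+1)$ in $\sigma$ into homogeneity degree $-Q=-(4n+2)$ in $g$, consistently with $|B(g,r)|\approx r^Q$.
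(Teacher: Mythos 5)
Your proof is correct and matches the paper's (implicit) argument: the paper states Corollary \ref{cor homo} as a direct consequence of the explicit formula, i.e.\ precisely by noting that $s$ is positively homogeneous of degree $-(2n+1)$ (either from $s(\sigma)=c_{n-1}\partial_{x_1}^{2(n-1)}(\bar\sigma/|\sigma|^4)$ or from \eqref{s 1}) and combining this with $K(t,y)=s(|y|^2+t)$ and $\delta_r(t,y)=(r^2t,ry)$, which gives the factor $(r^2)^{-(2n+1)}=r^{-Q}$. Your bookkeeping of the homogeneity degrees, including the point that $z=x_1+|\operatorname{Im}\sigma|\mathbf{i}$ scales linearly under $\sigma\mapsto\lambda\sigma$ for $\lambda>0$, is accurate, so nothing is missing.
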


Before proving  Theorem \ref{thm-ksize}, we will need
a mean value theorem on stratified groups. Note that there is a version established in \cite[(1.41)]{FS}. However it requires the function $f\in C^1(\mathscr H^{n-1})$, while the kernel $K$ of the Cauchy--Szeg\"o projection operator $\mathcal C$ has singularity at the origin.

We now provide the following mean value theorem on stratified groups with respect to the kernel $K$.
To begin with we recall
that an absolutely continuous curve $\gamma: [0,1]\to \mathscr H^{n-1}$ is horizontal if its tangent vectors
$\dot{\gamma}(t),  t\in[0, 1]$, lie in the horizontal tangent space $H_{\gamma(t)}$. By \cite{Chow}, any given two points $p,q\in \mathscr H^{n-1}$ can be connected by a horizontal curve.

 The Carnot--Carath\'eodory metric on $\mathscr H^{n-1}$ as follows. For $g,h\in \mathscr H^{n-1}$,
\begin{align}\label{cc metric}
d_{cc}(g,h):= \inf_\gamma \int_0^1 \langle  \dot{\gamma}(t), \dot{\gamma}(t)\rangle_H^{1\over2}\ dt,
\end{align}
where $\gamma: [0,1]\to \mathscr H^{n-1}$ is a horizontal Lipschitz curve with $\gamma(0)=g$, $\gamma(1)=h$.
It is known that the Carnot--Carath\'eodory metric $d_{cc}$ is left-invariant,  and it is equivalent to the homogeneous metric $\rho$ in the sense that: there exist $C_d, \tilde C_d>0$ such that for any $g, h\in\mathscr{H}^{n-1}$ (see \cite{BLU}),
\begin{align}\label{norm-equ}
C_d\rho(g,h)\leq d_{cc}(g,h)\leq \tilde C_d\rho(g,h).
 \end{align}

\begin{lemma}\label{lem-mean}
There exist $C>0$ and $c_0\in(0,1)$ such that
for $g,g_0\in \mathscr H^{n-1}\backslash\{{\bf 0}\}$ with
$d_{cc}(g,g_0)<c_0 d_{cc}(g_0, {\bf0})$,
we have
$$ |K(g)-K(g_0)|\leq Cd_{cc}(g,g_0)\times \max_{\substack{1\leq j\leq 4n-4\\ u: d_{cc}(u,0)\leq d_{cc}(g, g_0)}} \big|Y_j K(g_0\cdot u)\big|. $$
\end{lemma}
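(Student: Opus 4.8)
The plan is to reduce the statement to a one-dimensional mean value estimate along a horizontal curve joining $g_0$ to $g$, handling the singularity of $K$ at the origin by keeping the whole curve bounded away from $\mathbf 0$. First I would fix $c_0\in(0,1)$ small enough (to be chosen at the end) and, given $g,g_0$ with $d_{cc}(g,g_0)<c_0\,d_{cc}(g_0,\mathbf 0)$, invoke Chow's theorem together with the definition \eqref{cc metric} of $d_{cc}$ to pick a horizontal Lipschitz curve $\gamma:[0,1]\to\mathscr H^{n-1}$ with $\gamma(0)=g_0$, $\gamma(1)=g$, and length $\int_0^1\langle\dot\gamma(t),\dot\gamma(t)\rangle_H^{1/2}\,dt\le 2\,d_{cc}(g,g_0)$ (any constant $>1$ works). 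Reparametrising by arclength, I may assume $\langle\dot\gamma,\dot\gamma\rangle_H^{1/2}\le L:=2d_{cc}(g,g_0)$ for a.e.\ $t$. Writing $u(t):=g_0^{-1}\cdot\gamma(t)$, left-invariance of $d_{cc}$ gives $d_{cc}(u(t),\mathbf 0)=d_{cc}(\gamma(t),g_0)\le L$ for all $t$, so $\gamma(t)=g_0\cdot u(t)$ with $u(t)$ ranging only over the ball $\{d_{cc}(u,\mathbf 0)\le d_{cc}(g,g_0)\}$ that appears in the statement (here I use $L\le 2d_{cc}(g,g_0)$, absorbing the factor into $C$; alternatively choose the length bound to be exactly $d_{cc}(g,g_0)$ by first taking a near-optimal curve).

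Next I would check that $\gamma$ stays away from the origin so that $K$ is smooth along it. By the triangle inequality \eqref{rho} and the equivalence \eqref{norm-equ}, for every $t$ we have $d_{cc}(\gamma(t),\mathbf 0)\ge d_{cc}(g_0,\mathbf 0)-d_{cc}(\gamma(t),g_0)\ge d_{cc}(g_0,\mathbf 0)-L\ge(1-2c_0)\,d_{cc}(g_0,\mathbf 0)>0$ once $c_0<1/2$. Since $K$ is, by the explicit formula \eqref{s 1} of Theorem \ref{thm-cskernel} together with \eqref{kg}, a real-analytic $\mathbb H$-valued function on $\mathscr H^{n-1}\setminus\{\mathbf 0\}$, the composition $t\mapsto K(\gamma(t))$ is Lipschitz (indeed $C^1$) on $[0,1]$, and by the chain rule its derivative is $\frac{d}{dt}K(\gamma(t))=\sum_{j=1}^{4n-4} a_j(t)\,(Y_jK)(\gamma(t))$ where $\dot\gamma(t)=\sum_j a_j(t)\,Y_j(\gamma(t))$ in the orthonormal horizontal frame, so that $\big(\sum_j a_j(t)^2\big)^{1/2}=\langle\dot\gamma(t),\dot\gamma(t)\rangle_H^{1/2}\le L$.

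Then the estimate follows from the fundamental theorem of calculus:
\begin{align*}
|K(g)-K(g_0)|
&=\Big|\int_0^1 \frac{d}{dt}K(\gamma(t))\,dt\Big|
\le\int_0^1\sum_{j=1}^{4n-4}|a_j(t)|\,\big|(Y_jK)(\gamma(t))\big|\,dt\\
&\le\int_0^1 \Big(\sum_{j=1}^{4n-4}a_j(t)^2\Big)^{1/2}\Big(\sum_{j=1}^{4n-4}\big|(Y_jK)(\gamma(t))\big|^2\Big)^{1/2}dt\\
&\le (4n-4)^{1/2}\,L\;\max_{\substack{1\le j\le 4n-4\\ u:\,d_{cc}(u,\mathbf 0)\le d_{cc}(g,g_0)}}\big|Y_jK(g_0\cdot u)\big|,
\end{align*}
and since $L\le 2\,d_{cc}(g,g_0)$ this is bounded by $C\,d_{cc}(g,g_0)\times\max\big|Y_jK(g_0\cdot u)\big|$ with $C=2(4n-4)^{1/2}$, as claimed.

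The one genuinely delicate point — and the main obstacle — is the justification that $t\mapsto K(\gamma(t))$ is absolutely continuous with the stated derivative \emph{despite} $K$ being singular at $\mathbf 0$: this is exactly where the hypothesis $d_{cc}(g,g_0)<c_0\,d_{cc}(g_0,\mathbf 0)$ is used, via the lower bound $d_{cc}(\gamma(t),\mathbf 0)\ge(1-2c_0)d_{cc}(g_0,\mathbf 0)$ above, which confines $\gamma$ to a compact region on which $K$ and all the $Y_jK$ are smooth and bounded; on that region the classical chain rule for $C^1$ functions composed with Lipschitz curves (as in \cite[(1.41)]{FS}) applies verbatim. A secondary technical nuisance is making the curve-length bound and the radius $d_{cc}(g,g_0)$ match up exactly; this is handled, as indicated, either by choosing a sufficiently near-optimal curve or by allowing a harmless universal constant in front of $d_{cc}(g,g_0)$, since the statement of Lemma \ref{lem-mean} already contains an unspecified constant $C$.
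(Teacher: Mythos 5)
Your proposal is correct and follows essentially the same route as the paper's own proof: a near-optimal horizontal curve joining $g_0$ to $g$, the lower bound $d_{cc}(\gamma(t),\mathbf 0)\gtrsim d_{cc}(g_0,\mathbf 0)$ coming from the hypothesis $d_{cc}(g,g_0)<c_0\,d_{cc}(g_0,\mathbf 0)$ to keep away from the singularity, the chain rule in the horizontal frame $\dot\gamma=\sum_j a_jY_j(\gamma)$, and then the fundamental theorem of calculus plus Cauchy--Schwarz. The one small quibble is that a multiplicative bound on the curve length enlarges the ball $\{u: d_{cc}(u,\mathbf 0)\le d_{cc}(g,g_0)\}$ over which the maximum is taken (this cannot literally be absorbed into $C$), but your alternative fix of taking a curve of length (essentially) $d_{cc}(g,g_0)$ handles it, and the paper's own proof carries the same cosmetic factor $101/100$, which is harmless in the subsequent application.
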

\begin{proof}
Suppose that $g,g_0\in \mathscr H^{n-1}\backslash\{{\bf 0}\}$ with
$d_{cc}(g,g_0)<c_0 d_{cc}(g_0, {\bf 0})$, and let $\gamma$ be the curve in the definition of \eqref{cc metric}
with $\gamma(0)=g$ and $\gamma(1)=g_0$ such that
$$ d_{cc}(g,g_0)\leq \int_0^1 \langle \dot{\gamma}(t),  \dot{\gamma}(t)\rangle_H^{1\over2} \ dt \leq { 101\over 100 } d_{cc}(g,g_0).$$
Then for any $t\in [0,1]$, we have
\begin{align*}
d_{cc}(\gamma(t), {\bf 0})&\geq d_{cc}(g_0, {\bf 0}) -d_{cc}(\gamma(t),g_0) \geq d_{cc}(g_0, {\bf 0}) -c_0d_{cc}(g_0, {\bf 0}) \\
&=(1-c_0) d_{cc}(g_0, {\bf 0}).
\end{align*}
So $K(\gamma(t))$ is not singular for every $t\in [0,1]$.

To continue, we now write $\gamma(t)=(\gamma_1(t),\ldots,\gamma_{4n-1}(t)  )\in \mathscr{H}^{n-1}$.
Then $$\dot{\gamma}(t) = \sum_{j=1}^{4n-4} \gamma'_j(t) {\partial\over \partial y_j}
+ \sum_{\alpha=1}^3 \gamma'_{4n-4+\alpha}(t) {\partial\over \partial t_\alpha} = \sum_{j=1}^{4n-4} a_j(t) Y_j\big( \gamma(t)),    $$
where
$a_j(t):=\gamma'_j(t) $ for $j=1,2,\ldots,4n-4$,
$$ \gamma'_{4n-4+\alpha}(t) = 2\sum_{l=1}^{n-2}\sum_{j=1}^4 \sum_{k=1}^4 a_{4l+j} b_{kj}^\alpha \gamma_{4l+k}(t), $$
and $\langle \dot{\gamma}(t), \dot{\gamma}(t)\rangle_H = \sum_{j=1}^{4n-4} |a_j(t)|^2 $.

As a consequence, we have
\begin{align*}
K(g)-K(g_0) &= \int_0^1 {d\over dt} \Big( K(\gamma(t)) \Big) dt = \int_0^1 \Bigg( \sum_{j=1}^{4n-4} {\partial K\over \partial y_j} \gamma'_j(t) + \sum_{\alpha=1}^3 {\partial K\over \partial t_\alpha} \gamma'_{4n-4+\alpha}(t)\Bigg)  dt\\
&=  \int_0^1  \sum_{j=1}^{4n-4} a_j(t)\,Y_jK(\gamma(t))\ dt.
\end{align*}
Hence,
\begin{align*}
|K(g)-K(g_0)|
&\leq \max_{\substack{ 1\leq j\leq 4n-4 \\ 0\leq t\leq 1 }}|Y_jK(\gamma(t))|\     \int_0^1 \sum_{j=1}^{4n-4} |a_j(t)|\, dt\\
&\leq C\max_{\substack{ 1\leq j\leq 4n-4 \\ 0\leq t\leq 1 }}|Y_jK(\gamma(t))|\     \int_0^1 \bigg(\sum_{j=1}^{4n-4} |a_j(t)|^2\bigg)^{1\over 2}\, dt\\
&= C\max_{\substack{ 1\leq j\leq 4n-4 \\ 0\leq t\leq 1 }}|Y_jK(\gamma(t))|\     \int_0^1 \langle \dot{\gamma}(t), \dot{\gamma}(t)\rangle^{1\over 2}\, dt\\
&\leq C\max_{\substack{ 1\leq j\leq 4n-4 \\ 0\leq t\leq 1 }}|Y_jK(\gamma(t))| d_{cc}(g,g_0)\\
&\leq Cd_{cc}(g,g_0)\  \max_{\substack{1\leq j\leq 4n-4\\ u: d_{cc}(u,0)\leq d_{cc}(g, g_0)}} \big|Y_j K(g_0\cdot u)\big|,
\end{align*}
where $C$ is an absolute constant depending only on $n$.

The proof of Lemma \ref{lem-mean} is complete.
\end{proof}

\begin{proof}[Proof of Theorem \ref{thm-ksize}]

We begin with proving
the size estimate in (i).

 By \eqref{kg}, for any $g=(t, y)\in \mathscr H^{n-1}$, where $t=t_1{\bf i}+t_2{\bf j}+t_3{\bf k}$, $y\in\mathbb H^{n-1}$, we have
$$K(g)=s(|y|^2+t).$$

From \eqref{s sigma} and \eqref {derivative}, we can see that for $\sigma=x_1+x_2{\bf i}+x_3{\bf j}+x_4{\bf k}$,
\begin{align}\label{ssigma1}
s(\sigma)
&= c_{n-1}\sum_{k=0}^{2n-2}   (2n-2)!  (2n-k-1) (k+1)    {\bar\sigma\over z^{2n-k}   \bar z^{k+2}} \\
&\quad -  c_{n-1}\sum_{k=0}^{2n-3}   (2n-2)!  (2n-k-2) (k+1)   {1\over z^{2n-k-1}  \bar z^{k+2}},\nonumber
\end{align}
where $z =x_1+|\operatorname{Im}\sigma|{\bf i} $.
Therefore,
\begin{align*}
K(g)&=s(|y|^2+t)\\
&=c_{n-1}\sum_{k=0}^{2n-2}   (2n-2)!  (2n-k-1) (k+1){|y|^2-t\over \left(|y|^2+|t|{\bf i}\right)^{2n-k}\left(|y|^2-|t|{\bf i}\right)^{k+2}} \\
&\quad - c_{n-1}\sum_{k=0}^{2n-3} (2n-2)!  (2n-k-2) (k+1) {1\over \left(|y|^2+|t|{\bf i}\right)^{2n-k-1}\left(|y|^2-|t|{\bf i}\right)^{k+2}}.
\end{align*}

Since
\begin{align*}
\bigg|{|y|^2-t\over \left(|y|^2+|t|{\bf i}\right)^{2n-k}\left(|y|^2-|t|{\bf i}\right)^{k+2}} \bigg|
&={\big(|y|^4+|t|^2 \big)^{1\over 2}\over\big(|y|^4+|t|^2 \big)^{2n-k\over 2}\big(|y|^4+|t|^2 \big)^{k+2\over 2}}\\
&={1\over \big(|y|^4+|t|^2 \big)^{2n+1 \over 2}}={1\over \|g\|^{Q}},
\end{align*}
and
\begin{align*}
\bigg|{1\over \left(|y|^2+|t|{\bf i}\right)^{2n-k-1}\left(|y|^2-|t|{\bf i}\right)^{k+2}} \bigg|
={1\over \big(|y|^4+|t|^2 \big)^{2n+1 \over 2}}={1\over \|g\|^{Q}}.
\end{align*}
We have
$$|K(g)|\lesssim {1\over \|g\|^{Q}},$$
where the implicit constant depends only on $n$,
which shows that (i) holds.

\medskip
We now prove the regularity estimate as in
(ii).

To begin with, we recall that the vector field
$$ Y_{4l+j} ={\partial\over \partial y_{4l+j}} + 2 \sum_{\alpha=1}^3\sum_{k=1}^4 b_{kj}^\alpha y_{4l+k} {\partial\over\partial t_\alpha},\quad l=0,\cdots,n-2, \quad j=1,\cdots 4,$$
where for $\alpha=1,2,3$, $b_{kj}^\alpha$ is the element (from the $k$th row and the $j$th column)
in the matrix $b^\alpha$ below:
\begin{equation*}
b^1:=\left( \begin{array}{cccc}
0&1 & 0 & 0\\
-1 & 0 & 0&0\\
0 & 0 & 0&-1\\
0 & 0 & 1&0
\end{array}
\right ),
\quad
b^2:=\left( \begin{array}{cccc}
0&0 & 1 & 0\\
0 & 0 & 0&1\\
-1 & 0 & 0&0\\
0 & -1 & 0 &0
\end{array}
\right ),
\quad
b^3:=\left( \begin{array}{cccc}
0&0 & 0 & 1\\
0 & 0 & -1&0\\
0 & 1 & 0&0\\
-1 & 0 & 0 &0
\end{array}
\right ).
\end{equation*}

%
For any $g,h,g_0 $ with  $\rho(g_0,h)\geq c\rho(g,g_0)$, we set $x = h^{-1}\cdot g_0$, then by Lemma \ref{lem-mean} and \eqref{norm-equ}, we have
\begin{equation}\begin{split}\label{kk}
|K(g, h)-K(g_0,h)|&=\left|K(h^{-1}\cdot g)-K(h^{-1}\cdot g_0)\right|
\\
&=\left|K(x\cdot (g_0^{-1}\cdot g))-K(x)\right|\\
&\leq C d_{cc}(g, g_0) \max_{\substack{1\leq j\leq 4n-4\\ u: d_{cc}(u,0)\leq d_{cc}(g, g_0)}} \big|Y_j K(x\cdot u)\big|\\
&\leq C \rho(  g,g_0 )
\max_{\substack{1\leq j\leq 4n-4\\ u: d_{cc}(u,0)\leq d_{cc}(g, g_0)}} \big|Y_j K(x\cdot u)\big|.
\end{split}\end{equation}

Recall that  for any $g=(t,y) = (t_1{\bf i}+t_2 {\bf j}+ t_3 {\bf k}, y)$,
$$ K(g) = s( |y|^2 +t ) = s( |y|^2, t_1,t_2 , t_3 ). $$
Then for any $l=0,\cdots,n-2$, $ j=1,\cdots 4$.
\begin{align*}
 Y_{4l+j} K(g)& = \bigg( {\partial\over \partial y_{4l+j}} + 2 \sum_{\alpha=1}^3\sum_{k=1}^4 b_{kj}^\alpha y_{4l+k} {\partial\over\partial t_\alpha} \bigg) s( |y|^2, t_1,t_2 , t_3 )\\
& = \bigg( {\partial\over \partial y_{4l+j}} + 2 \sum_{\alpha=1}^3\sum_{k=1}^4 b_{kj}^\alpha y_{4l+k} {\partial\over\partial t_\alpha}  \bigg) s( |y|^2, t_1,t_2 , t_3 )\\
&=2y_{4l+j} {\partial s\over \partial x_1}( |y|^2, t_1,t_2 , t_3 )  +2 \sum_{\alpha=1}^3\sum_{k=1}^{4} b_{kj}^\alpha y_{4l+k} {\partial s\over \partial x_{\alpha+1}}( |y|^2, t_1,t_2 , t_3 )
\end{align*}

From \eqref {s sigma} and \eqref {derivative}, we can see that
%
%
\begin{align*}
 {\partial s\over \partial x_1}&= c_{n-1} { \partial ^{2n-1}\over \partial x_1^{2n-1}  }\bigg( {1\over |\sigma|^4} \bigg) \bar\sigma + c_{n-1} { \partial ^{2n-2}\over \partial x_1^{2n-2}  }\bigg( {1\over |\sigma|^4} \bigg)\,  {\partial  \bar\sigma\over \partial x_1}
 + (2n-2)c_{n-1} { \partial ^{2n-2}\over \partial x_1^{2n-2}  }\bigg( {1\over |\sigma|^4} \bigg)\, \\
 &= c_{n-1} { \partial ^{2n-1}\over \partial x_1^{2n-1}  }\bigg( {1\over |\sigma|^4} \bigg) \bar\sigma + (2n-1)c_{n-1} { \partial ^{2n-2}\over \partial x_1^{2n-2}  }\bigg( {1\over |\sigma|^4} \bigg)\\
 &=-c_{n-1}\sum_{k=0}^{2n-1}     (2n-1)! (2n-k)(k+1)   {1\over z^{2n-k+1}\bar z^{k+2}}
\bar\sigma\\
 &\quad+c_{n-1}\sum_{k=0}^{2n-2}     (2n-1)! (2n-k-1)(k+1)   {1\over z^{2n-k}\bar z^{k+2}},
\end{align*}
where we used the fact that $ {\partial  \bar\sigma\over \partial x_1} =1 $.

Next, by \eqref{ssigma1}, we have
\begin{align*}
{\partial s\over \partial x_{\alpha+1}}
&= c_{n-1}\sum_{k=0}^{2n-2}   (2n-2)!  (2n-k-1) (k+1)  {\partial \over \partial x_{\alpha+1}} \bigg[ {\bar\sigma\over z^{2n-k}   \bar z^{k+2}} \bigg]\\
&\quad -  c_{n-1}\sum_{k=0}^{2n-3}   (2n-2)!  (2n-k-2) (k+1)  {\partial \over \partial x_{\alpha+1}} \bigg[ {1\over z^{2n-k-1}  \bar z^{k+2}}\bigg],
\end{align*}
where
\begin{align*}
&{\partial \over \partial x_{\alpha+1}} \bigg[ {\bar\sigma\over z^{2n-k}   \bar z^{k+2}} \bigg]\\
&={1\over z^{2n-k}   \bar z^{k+2}}{\partial \bar \sigma \over \partial x_{\alpha+1}}-{(2n-k)\bar\sigma\over z^{2n-k+1}   \bar z^{k+2}}{\partial z\over \partial x_{\alpha+1}}-{(k+2)\bar\sigma\over z^{2n-k}   \bar z^{k+3}}{\partial \bar z\over \partial x_{\alpha+1}}\\
&=-{1\over z^{2n-k}   \bar z^{k+2}} {\bf i}_\alpha-{(2n-k)\bar\sigma\over z^{2n-k+1}   \bar z^{k+2}}{x_{\alpha+1}\over |\operatorname{Im}\sigma|} {\bf i}_\alpha+{(k+2)\bar\sigma\over z^{2n-k}   \bar z^{k+3}}{x_{\alpha+1}\over |\operatorname{Im}\sigma|} {\bf i}_\alpha,
\end{align*}
and
\begin{align*}
{\partial \over \partial x_{\alpha+1}} \bigg[ {1\over z^{2n-k-1}   \bar z^{k+2}} \bigg]
&=-{(2n-k-1)\over z^{2n-k}   \bar z^{k+2}}{\partial z\over \partial x_{\alpha+1}}-{k+2\over z^{2n-k-1}   \bar z^{k+3}}{\partial \bar z\over \partial x_{\alpha+1}}\\
&=-{(2n-k-1)\over z^{2n-k}   \bar z^{k+2}}{x_{\alpha+1}\over |\operatorname{Im}\sigma|}{\bf i}_\alpha+{k+2\over z^{2n-k-1}   \bar z^{k+3}}{x_{\alpha+1}\over |\operatorname{Im}\sigma|}{\bf i}_\alpha.
\end{align*}
Therefore,
\begin{align*}
 &Y_{4l+j} K(g)\\
 & =2y_{4l+j} {\partial s\over \partial x_1}( |y|^2, t_1,t_2 , t_3 )  +2 \sum_{\alpha=1}^3\sum_{k=1}^{4} b_{kj}^\alpha y_{4l+k} {\partial s\over \partial x_{\alpha+1}}( |y|^2, t_1,t_2 , t_3 )\\
 &=2y_{4l+j}\Bigg[ -c_{n-1}\sum_{k=0}^{2n-1}     (2n-1)! (2n-k)(k+1)   {(|y|^2-t)\over (|y|^2+|t|{\bf i})^{2n-k+1}(|y|^2-|t|{\bf i})^{k+2}}
\\
 &\qquad\qquad\quad+c_{n-1}\sum_{k=0}^{2n-2}     (2n-1)! (2n-k-1)(k+1)   {1\over (|y|^2+|t|{\bf i})^{2n-k}(|y|^2-|t|{\bf i})^{k+2}} \Bigg]\\
 &\quad+2 \sum_{\alpha=1}^3\sum_{k=1}^{4} b_{kj}^\alpha y_{4l+k}\Bigg\{c_{n-1}\sum_{k=0}^{2n-2}   (2n-2)!  (2n-k-1) (k+1)\\
 &\qquad\qquad  \bigg[ -{1\over (|y|^2+|t|{\bf i})^{2n-k}   (|y|^2-|t|{\bf i})^{k+2}} {\bf i}_\alpha
 -{(2n-k)(|y|^2-t)\over (|y|^2+|t|{\bf i})^{2n-k+1}   (|y|^2-|t|{\bf i})^{k+2}}{t_\alpha\over|t|} {\bf i}_\alpha\\
 &\qquad\qquad\qquad +{(k+2)(|y|^2-t)\over (|y|^2+|t|{\bf i})^{2n-k}   (|y|^2-|t|{\bf i})^{k+3}}{t_\alpha\over|t|} {\bf i}_\alpha \bigg]\\
 &\qquad\quad -  c_{n-1}\sum_{k=0}^{2n-3}   (2n-2)!  (2n-k-2) (k+1) \bigg[-{(2n-k-1)\over (|y|^2+|t|{\bf i})^{2n-k}   (|y|^2-|t|{\bf i})^{k+2}}{t_\alpha\over|t|}{\bf i}_\alpha \\
 &\qquad\qquad\qquad\qquad\qquad
 +{k+2\over (|y|^2+|t|{\bf i})^{2n-k-1}   (|y|^2-|t|{\bf i})^{k+3}}{t_\alpha\over|t|}{\bf i}_\alpha \bigg]  \Bigg\}\\
 &=:I+II.
\end{align*}
For the term $I$, by definition, we have
\begin{align*}
|I|&\leq 2c_{n-1}\sum_{k=0}^{2n-1}     (2n-1)! (2n-k)(k+1)   {(|y|^4+|t|^2)^{1\over 2}|y_{4l+j}|\over (|y|^4+|t|^2)^{2n-k+1\over 2}(|y|^4+|t|^2)^{k+2\over 2}}  \\
 &\quad+(2n-1)c_{n-1}\sum_{k=0}^{2n-2}     (2n-2)! (2n-k-1)(k+1)   {|y_{4l+j}|\over (|y|^4+|t|^2)^{2n-k\over 2}(|y|^4+|t|^2)^{k+2\over 2}}\\
 &\leq C{1\over (|y|^4+|t|^2)^{n+{3\over 4}}}\\
 &=C{1\over \|g\|^{Q+1}}.
\end{align*}

For the term $II$, we find
\begin{align*}
|II|&\leq 2 \sum_{\alpha=1}^3\sum_{k=1}^{4n-4} \big|b_{kj}^\alpha\big| |y_{4l+k}|
\\
&\quad\times\Bigg\{c_{n-1}\sum_{k=0}^{2n-2}   (2n-2)!  (2n-k-1) (k+1) \bigg[ {1\over (|y|^4+|t|^2)^{2n-k\over 2}   (|y|^4+|t|^2)^{k+2\over 2}}\\
 &
 \qquad\qquad+{(2n-k)(|y|^4+|t|^2)^{1\over 2}\over (|y|^4+|t|^2)^{2n-k+1\over 2}   (|y|^4+|t|^2)^{k+2\over 2}}{|t_\alpha|\over|t|} +{(k+2)(|y|^4+|t|^2)^{1\over 2}\over (|y|^4+|t|^2)^{2n-k\over 2}   (|y|^4+|t|^2)^{k+3\over 2}}{|t_\alpha|\over|t|}  \bigg] \\
 &\qquad+ c_{n-1}\sum_{k=0}^{2n-3}   (2n-2)!  (2n-k-2) (k+1) \bigg[{(2n-k-1)\over (|y|^4+|t|^2)^{2n-k\over 2}   (|y|^4+|t|^2)^{k+2\over 2}}{|t_\alpha|\over|t|}  \\
 &\qquad\qquad\qquad\qquad
+{k+2\over (|y|^4+|t|^2)^{2n-k-1\over 2}   (|y|^4+|t|^2)^{k+3\over 2}}{|t_\alpha|\over|t|} \bigg]  \Bigg\}\\
 &\leq 2 \sum_{\alpha=1}^3\sum_{k=1}^{4n-4} \big|b_{kj}^\alpha\big| \Bigg\{c_{n-1}\sum_{k=0}^{2n-2}   (2n-2)!  (2n-k-1) (k+1) \\
 &\qquad\qquad\qquad\qquad\qquad\qquad\times  \bigg[ {1\over (|y|^4+|t|^2)^{n+{3\over 4}} }
 +{(2n-k)\over (|y|^4+|t|^2)^{n+{3\over 4}}}
 +{(k+2)\over (|y|^4+|t|^2)^{n+{3\over 4}}}  \bigg] \\
 &\qquad
 + c_{n-1}\sum_{k=0}^{2n-3}   (2n-2)!  (2n-k-2) (k+1)
\times\bigg[{(2n-k-1)\over (|y|^4+|t|^2)^{n+{3\over 4}}} +{(2n-k)\over (|y|^4+|t|^2)^{n+{3\over 4}}} \bigg]  \Bigg\}\\
 &\leq C{1\over (|y|^4+|t|^2)^{n+{3\over 4}}}\\
 &=C{1\over \|g\|^{Q+1}},
\end{align*}
which implies \eqref{gradient} holds.

Therefore, combining the estimates of $I$ and $II$ we obtain that
\begin{align*}
\big| Y_{4l+j} K(g)\big|\leq C{1\over \|g\|^{Q+1}}.
\end{align*}
Thus
\begin{align*}
\big| Y_{4l+j} K(x\cdot u)\big|\leq C{1\over \|x\cdot u\|^{Q+1}}
=C{1\over \rho(u, x^{-1})^{Q+1}}.
\end{align*}

Since $\rho$ is a quasi-distance, by \eqref{rho}, \eqref{norm-equ} and the fact that $\rho(g_0,h)\geq c\rho(g,g_0)$, we have
\begin{align*}
\rho(u, x^{-1})&\geq {1\over C_\rho}\rho(0,x^{-1})-\rho(0,u)
\geq{1\over C_\rho}\rho(g_0,h)-{\tilde C_d\over C_d}\rho(g,g_0)\\
&\geq{1\over C_\rho}\rho(g_0,h)-{\tilde C_d\over cC_d}\rho(g_0,h)\\
&=\Big({1\over C_\rho}-{\tilde C_d\over cC_d}\Big)\rho(g_0,h).
\end{align*}
Consequently,
\begin{align*}
\big| Y_{4l+j} K(x\cdot u)\big|\lesssim {1\over \rho(g_0,h)^{Q+1}}.
\end{align*}
Together with \eqref{kk} we can see
\begin{equation*}
|K(g, h)-K(g_0,h)|\lesssim  {\rho(  g,g_0 )\over \rho(g_0,h)^{Q+1}}.
\end{equation*}

Similarly, if $\rho(g,h_0)\geq c \rho(h,h_0)$, we can obtain
$$ |K(g,h) - K(g,h_0) |\lesssim   { \rho(h, h_0)\over \rho(g,h_0)^{Q+1} }.$$
The proof of Theorem \ref{thm-ksize} is complete.
\end{proof}

\begin{proof}[Proof of Theorem \ref{thm-lowerbd}]


We prove this theorem by 4 steps, using the idea in \cite{DLLW}.

\medskip
{\bf Step 1.} We claim that: ``there exists a point $g_0$ on the unit sphere of $\mathscr H^{n-1}$ such that $K(g_0)\not=0$''.

\medskip
To see this, we choose $g_0 = \Big( {{\bf i} \over  \sqrt2}, {y\over \sqrt[4]{2}} \Big) \in \mathscr H^{n-1}$ such that $|y|=1$. It is clear that this $g_0$ is on the unit sphere of $\mathscr H^{n-1}$.

By \eqref{kg},  we have
$$K(\delta_{\sqrt[4] 2}~g_0)=s(1+{\bf i}).$$

Based on the equality \eqref{s sigma1} in the proof of Theorem \ref{thm-ksize}, we see that for $\sigma=1+{\bf i} $, we have
$z=1+{\bf i}$ such that
\begin{align*}
s(1+{\bf i})
&= c_{n-1} {2   (2n-2)! \over |z|^4    (z-\bar z)^3 }\\
&\qquad\times\Bigg\{ \bigg[ {\bar z^2\over z^{2n-2}}\big( z+(2n-1){z-\bar z\over2} \big) - { z^2\over\bar z^{2n-2}}\big( \bar z-(2n-1){z-\bar z\over2} \big) \bigg]   {\overline \sigma} \nonumber\\
&\qquad\qquad\qquad-
  \bigg[ {\bar z^2\over z^{2n-3}}\big( z+(2n-2){z-\bar z\over2} \big) - { z^2\over\bar z^{2n-3}}\big( \bar z-(2n-2){z-\bar z\over2} \big)\bigg] \Bigg\}\\
  &:= -c_{n-1} {  (2n-2)! \over 16{\bf i} } \times A.  \nonumber
\end{align*}

For the term $A$, we further have
%
%
\begin{align*}
A&=  \bigg[ {(1-{\bf i})^2\over (1+{\bf i})^{2n-2}}\big( 1+{\bf i}+(2n-1) {\bf i} \big) - { (1+{\bf i})^2\over(1-{\bf i})^{2n-2}}\big( 1-{\bf i}-(2n-1) {\bf i} \big) \bigg]   (1-{\bf i}) \nonumber\\
&\qquad-
  \bigg[ {(1-{\bf i})^2\over (1+{\bf i})^{2n-3}}\big( 1+{\bf i}+(2n-2) {\bf i} \big) - { (1+{\bf i})^2\over (1-{\bf i})^{2n-3}}\big( (1-{\bf i})-(2n-2) {\bf i} \big)\bigg] \\
  &= { (1-{\bf i})^3\over (1+{\bf i})^{2n-2}  } (1+2n{\bf i})-
  { (1+{\bf i})^2\over (1-{\bf i})^{2n-3}  } (1-2n{\bf i})
  - { (1-{\bf i})^2\over (1+{\bf i})^{2n-3}  } (1+(2n-1){\bf i})\\
  &\qquad +  { (1+{\bf i})^2\over (1-{\bf i})^{2n-3}  } (1-(2n-1){\bf i})\\
  &= { (1-{\bf i})^3(1+ {\bf i})^2\over (1+{\bf i})^{2n}  } (1+2n{\bf i})-
   { (1-{\bf i})^2(1+{\bf i})^3\over (1+{\bf i})^{2n}  } (1+(2n-1){\bf i})\\
&\qquad+  { (1+{\bf i})^2 (1-{\bf i})^3\over (1-{\bf i})^{2n}  }[ 1- (2n-1){\bf i} -(1-2n{\bf i}) ]\\
&={4\over  (1+{\bf i})^{2n}}\big[(1-{\bf i}) (1+2n{\bf i})-(1+{\bf i}) (1+(2n-1){\bf i})\big]
+ {4\over  (1-{\bf i})^{2n}} (1-{\bf i}) {\bf i}\\
&= {4\over  (1+{\bf i})^{2n}}(4n-1-{\bf i})+ {4\over  (1-{\bf i})^{2n}}({\bf i}+1)\\
&={4\over (2i)^n}\big[4n-1-{\bf i}+(-1)^n({\bf i}+1)\big]\not=0.
\end{align*}
Therefore,
$$K (\delta_{\sqrt[4] 2}~g_0) \not=0.$$
 Then by Corollary \ref {cor homo}, we see that $K( g_0)\not=0$, which shows that the claim holds.

\medskip
{\bf Step 2.} We claim that: ``there is a positive constant $C$ such that for every $R>0$ and every $g\in \mathscr H^{n-1}$, there exists a point $g_*$ in $\mathscr H^{n-1}$ such that $\rho(g,g_*)=R$ and that  $|K(g,g_*)|\approx {1\over R^Q}$ ''.

To see this, for every $R>0$ and every $g\in \mathscr H^{n-1}$, we set
$$ g_* = g \cdot \delta_R (g_0^{-1}). $$

Then it is clear that
$$ \rho(g,g_*) = \rho(g, g \cdot \delta_R (g_0^{-1})) = R \rho({\bf 0},g_0^{-1}) =  R \rho({\bf 0},g_0)=R.  $$

Next, we note that  from Corollary \ref{cor homo}
\begin{align*}
K(g,g_*) = K(  g, g \cdot \delta_R (g_0^{-1}) ) = K({\bf 0}, \delta_R (g_0^{-1})) = R^{-Q}  K({\bf 0},  g_0^{-1}) = R^{-Q} K(g_0).
\end{align*}
Hence, we obtain that
$$ |K(g,g_*)|=  R^{-Q} |K(g_0)|=: C_0 R^{-Q} $$
and since from {\bf Step 1} we know that $K(g_0)$ is non-zero.

\medskip
{\bf Step 3.} We claim that: there exist positive constants $C$ and $R$ such that for every ball $B=B(g,r)\subset \mathscr H^{n-1}$ with $r>0$ and $g\in \mathscr H^{n-1}$, there exists another ball $\tilde B= B(g_*,r)$ in $\mathscr H^{n-1}$ such that $\rho(g,g_*)=Rr$ and that  for every
$g_1\in B$ and $g_2\in \tilde B$,
$$|K(g_1,g_2)|\geq {C\over \rho(g_1,g_2)^Q}.$$

To see this, we first note that $K$ satisfies H\"older's regularity. Hence, for the point $g_0$ obtained  in {\bf Step 1},  there exists a small positive constant $\epsilon_0<1$ such that
\begin{align}\label{g0n}
 K(\tilde g) \not=0\quad{\rm and}\quad |K(\tilde g)|>{1\over 2}|K(g_0)|
 \end{align}
for all $\tilde g\in  B(g_0,2C_\rho\epsilon_0)$.

Now for every $B=B(g,r)\subset \mathscr H^{n-1}$ with $r>0$ and $g\in \mathscr H^{n-1}$, by using the argument in {\bf Step 2} we see that there exists a point $g_*=g \cdot \delta_{Rr} (g_0^{-1})$ in $\mathscr H^{n-1}$ such that $\rho(g,g_*)=Rr$ and that  $|K(g,g_*)|\approx {1\over (Rr)^Q}$, where $R$ is chosen to be large enough such that
$ {1\over 2\epsilon_0}<R<{1\over\epsilon_0} $.

Then for every $g_1\in B$ there exists $g'_1\in B({\bf 0},\epsilon_0)$ such that we can write $g_1= g\cdot \delta_{Rr} (g'_1)$. Also, for every $g_2\in B(g_*, r)$, there exists $g'_2\in B({\bf 0},\epsilon_0)$ such that we can write $g_2= g_*\cdot \delta_{Rr} (g'_2)$.
Now we have
\begin{align*}
K(g_1,g_2) &= K\big( g\cdot \delta_{Rr} (g'_1), g_*\cdot \delta_{Rr} (g'_2) \big)
=K\big( g\cdot \delta_{Rr} (g'_1), g \cdot \delta_{Rr} (g_0^{-1})\cdot \delta_{Rr} (g'_2) \big)\\
&=K\big(  \delta_{Rr} (g'_1),  \delta_{Rr} (g_0^{-1})\cdot \delta_{Rr} (g'_2) \big)=K\big(  \delta_{Rr} (g'_1),  \delta_{Rr} (g_0^{-1}\cdot  g'_2) \big)\\
&= (Rr)^{-Q}K\big(   g'_1,  g_0^{-1}\cdot  g'_2 \big)\\
&= (Rr)^{-Q}K\big(    (g'_2)^{-1}\cdot g_0 \cdot g'_1   \big).
\end{align*}
Next, note that
\begin{align*}
\rho\big(    (g'_2)^{-1}\cdot g_0 \cdot g'_1, g_0   \big) &= \rho\big(     g_0 \cdot g'_1, g'_2\cdot g_0   \big)\leq C_\rho\big[ \rho\big(     g_0 \cdot g'_1,  g_0   \big) + \rho\big(     g_0, g'_2\cdot g_0   \big) \big]\\
&\leq C_\rho\big[ \rho\big(      g'_1,  0   \big) + \rho\big(     0, g'_2  \big) \big]\leq 2C_\rho\epsilon_0,
\end{align*}
which shows that $(g'_2)^{-1}\cdot g_0 \cdot g'_1\in B(g_0,2C_\rho\epsilon_0)$.
Then by \eqref{g0n}, we have
$$|K\big(    (g'_2)^{-1}\cdot g_0 \cdot g'_1   \big)|>{1\over 2}|K(g_0)|.$$
Therefore,
$$K(g_1,g_2)>{1\over 2(Rr)^Q}|K(g_0)|\geq C(K,C_\rho){1\over \rho(g_1, g_2)^Q}, $$
for any $g_1\in B(g,r)$ and any $g_2\in B(g_*, r)$.

\medskip
{\bf Step 4.}
Since $K$ is continuous on $\mathscr H^{n-1}\setminus\{{\bf 0}\}$, by {\bf Step 1.} we can see that there exists a compact set $\Omega$ on the unit sphere $\mathcal S({\bf 0},1)$ of $\mathscr H^{n-1}$ with $m(\Omega)>0$, where $m$ is the Radon measure on $\mathcal S({\bf 0},1)$, such that
$K(g)\not=0$ for any $g\in\Omega$. Then by the similar proof to that of \cite[Theorem 1.1]{CDLW}, we can
find a positive constant $r_0$ and a ``twisted truncated sector'' $S_g\subset \mathscr H^{n-1}$
such that
$$ \inf_{g'\in S_g} \rho(g,g')=r_0 $$ and that for every $g_1\in B(g,1)$ and $g_2\in S_g$ we have
\begin{align*}
|K(g_1, g_2)|\geq  {C(K, C_\rho)\over\rho(g_1,g_2)^{Q}}.
\end{align*}
Moreover, this sector $S_g$ is regular in the sense that $|S_g|=\infty$ and that for every
$R_2>R_1>2r_0$
$$ \big| \big(B(g,R_2)\backslash B(g,R_1)\big)  \cap S_g\big | \approx  \big| B(g,R_2)\backslash B(g,R_1)\big|$$
with the implicit constants  independent of $g$ and $R_1,R_2$.
The proof of Theorem \ref {thm-lowerbd} is complete.
\end{proof}
\begin{remark}
From the proof of Theorem \ref {thm-lowerbd}, we see that we have the following version of kernel
lower bound concerning the real coordinate of each component.

Since $K(g,h)$ is $\mathbb H$-valued, we write
$$ K(g,h) = K_1(g,h)+K_2(g,h){\bf i}+K_3(g,h){\bf j}+K_4(g,h){\bf k}, $$
where each $K_i(g,h)$ is real-valued, $i=1,2,3,4$.

Then following kernel lower bound in  Theorem \ref {thm-lowerbd} and using the idea of Hyt\"onen \cite{Hy} we see that at least one of the $K_i$ above satisfies the following argument:

There exist positive constants $3\le A_1\le A_2$ such that for any ball $B:=B(g_0, r)\subset \mathscr H^{n-1}$, there exist another ball $\widetilde B:=B(h_0, r)\subset \mathscr H^{n-1}$
such that $A_1 r\le d(g_0, h_0)\le A_2 r$, 
and  for all $(g,h)\in ( B\times \widetilde{B})$,
$K(g, h)$ does not change sign
and
\begin{equation}\label{e-assump cz ker low bdd}
|K(g, h)|\geq  {C\over\rho(g,h)^{Q}}.
\end{equation}

\end{remark}

\begin{proof}[Proof of Theorem \ref{thm-commutator}]
The argument (i) follows from the kernel lower bound obtained in Theorem \ref{thm-lowerbd} and
the standard proof in \cite{Hy} (also can be achieved by following \cite{DGKLWY}).

It suffices to prove the ``only if'' part of (ii), since the ``if'' part follows from the argument in \cite{CDLW}.

To see this, we seek to get a contradiction by using the idea (due to M. Lacey) stated at the beginning of
 proof of (2) of Theorem 1.1 in \cite{DLLWW}, that is, in its simplest form, there is no bounded operator $T \;:\; \ell^{p} (\mathbb N) \to \ell^{p} (\mathbb N)$ with $Te_{j } = T e_{k} \neq 0$ for all $j,k\in \mathbb N$.  Here, $e_{j}$ is the
standard basis for $\ell^{p} (\mathbb N)$.

To get to this contradiction, we just need to use the kernel lower bound in  \eqref{e-assump cz ker low bdd}, and then combine the argument used in the proof of (2) of Theorem 1.1 in \cite{DLLWW}.
Repeating the process there almost step by step, we obtain the ``only if'' part.
We leave the details to readers.
\end{proof}

\bigskip
{\bf Acknowledgement:} Chang is supported by NSF grant DMS-1408839 and a McDevitt Endowment Fund at Georgetown University. Duong and Li are supported by the Australian Research Council (ARC) through the research grants DP170101060 and DP190100970 and by Macquarie University Research Seeding Grant.
 Wang is supported by National Nature Science Foundation in China (No. 11571305).
 Wu is supported by NSF of China (Grants No. 11671185 and No. 11701250), the Natural Science Foundation of Shandong Province (No. ZR2018LA002 and No. ZR2019YQ04).

\bibliographystyle{amsplain}

\end{document}